\documentclass[12pt]{article}
\usepackage[T1]{fontenc}
\usepackage{amsmath,amssymb,amsthm,amscd,dsfont}
\usepackage{amsfonts,latexsym,rawfonts,amsmath,amssymb,amsthm}
\usepackage{lscape}
\usepackage{amscd, float,times,rotating}
\usepackage{pb-diagram}
\usepackage{hyperref}
\numberwithin{equation}{section}

\newtheorem{prop}{Proposition}[section]
\newtheorem{theorem}[prop]{Theorem}

\newtheorem{remark}[prop]{Remark}
\newtheorem{example}[prop]{Example}
\newtheorem{definition}[prop]{Definition}

\newcommand{\R}{\mathbb{R}}

\begin{document}

\title{Estimates of $p$-capacity for manifolds with Ricci curvature bounded from below}
\author{Xiaoshang Jin\thanks{X. J. is supported by ``the Fundamental Research Funds for the Central Universities'', HUST: \# 2025BRSXB002 and NSFC (Grant No. 12471054)} \and Zhiwei L\"u \and Jiabin Yin\thanks{J. Y. is supported by the NSFC (Grant No. 12201138), Mathematics Tianyuan fund project (Grant No. 12226350) and NSF of Henan Province (Grant No. 262300421869)}}
\date{}
\maketitle

\begin{abstract}
We study sharp estimates for the \(p\)-capacity on complete non-compact Riemannian manifolds under lower Ricci curvature bounds.

First, we establish sharp comparison inequalities for the \(p\)-capacity of bounded smooth domains in manifolds satisfying \(\operatorname{Ric}\ge -ng\). The estimates are expressed in terms of the boundary mean curvature and correspond to natural warped-product model ends. We characterize all equality cases and show that equality forces the exterior region to be isometric to the corresponding warped product. We also obtain an analogous sharp estimate under nonnegative Ricci curvature, whose equality case is described by an asymptotically flat model end.

Second, we investigate normalized lower bounds for the relative \(p\)-capacity of condensers. We introduce scale-invariant quantities involving the volume of the inner set and the diameter of the ambient domain, establish uniform positive lower bounds, and determine the optimal ranges of the normalization parameters.
\end{abstract}

\section{Introduction}

\begin{definition}
  Assume that $(M,g)$ is a complete Riemannian manifold and $\Omega\subseteq M$ is a compact subset. For any $p>1,$
the $p$-capacity of $\Omega$ in $M$ is defined as the infimum of the $p$-Dirichlet energy:
$$\mathrm{Cap}_p(\Omega)=\mathrm{Cap}_p(\Omega,M)=\inf\left\{\int_{M}|\nabla u|^p \,dv_g:\ u|_{\Omega}=1, \ u\in C^\infty_0(M)\right\}$$
\end{definition}
In general, we require that $M$ is non-compact and $\Omega$ is a closed domain with compact smooth boundary $\partial\Omega.$ Then
$$
\mathrm{Cap}_p(\Omega)=\inf\left\{\int_{M\setminus\Omega}|\nabla u|^p\, dv_g:\ u \ \text{is locally Lipschitz in}\  M\setminus\Omega,\ u|_{\partial\Omega}=1, \ \lim\limits_{x\rightarrow\infty}u(x)=0\right\}
$$
When the absolute $p$-capacitary potential exists, it is the unique solution $u_p$ of the following exterior Dirichlet problem:
$$\begin{cases}
  \Delta_p u={\rm div}(|\nabla u|^{p-2}\nabla u)=0\ & {\rm in}\ M\setminus\Omega, \\
  u=1\ &{\rm on}\ \ \partial\Omega,\\
  u(x)\rightarrow0  &{\rm as} \ \ x\rightarrow\infty.
\end{cases}$$
Such a function $u_p$ is called the $p$-capacitary potential of $\Omega$ in $M$. Its existence requires an appropriate $p$-hyperbolicity assumption; see, for instance, Theorem 4.1 in \cite{fogagnolo2022minimising}. When it exists, the $p$-capacity is given by
$$
\mathrm{Cap}_p(\Omega)=\int_{M\setminus\Omega}|\nabla u_p|^p\, dv_g=\int_{\{u=t\}}|\nabla u_p|^{p-1}\, d\sigma_g
$$
for any regular value $t\in(0,1].$ Here $d\sigma_g$  denotes the ($n-$dimensional) Riemannian surface element induced on the level set.
\par The concept of capacity originates from electrostatics and $\mathrm{Cap}_2(\Omega,\mathbb{R}^3)$ represents the total electrical charge the conductor $\Omega$ can hold. For a general $p>1$, the $p$-capacity is a fundamental notion in nonlinear potential theory and geometric analysis.
In Euclidean space $\mathbb{R}^n$, sharp relationships among capacity, volume, surface area, and mean curvature have been extensively studied; see, for instance, \cite{adams2025full,kruglikov1987capacity,maz2013sobolev,polya1951isoperimetric,xiao2017p}. More recently, sharp upper and lower bounds for the $p$-capacity in hyperbolic space have been obtained by inverse mean curvature flow and conformal methods; see \cite{jin2026sharp,jin2025sharply,li2025sharp} for further details.
\par In general Riemannian manifolds, \cite{grigor1999isoperimetric} provided some basic conceptions and tools of capacity. In 2008, Bray and Miao \cite{bray2008capacity}  established a sharp upper bound for $\mathrm{Cap}_2(\Omega)$
 in an asymptotically flat 3-manifold with nonnegative scalar curvature (other related mass-capacity inequality see \cite{miao2024implications,miao2025mass,oronzio2025area,oronzio2025adm}).  The result links the capacity to the Hawking mass and the total ADM mass and was later extended to general $p\in(1,3)$ case by Xiao in \cite{xiao2016p}. The theory was further developed to mass-$p$-capacity in \cite{mazurowski2023monotone,xia2024new} (other relevant literature
see \cite{yin2026sharp}). For asymptotically hyperbolic Einstein manifolds, the first author studied the capacity of balls in \cite{jin2025relative}.
\par In this paper, we first establish sharp comparison inequalities for the $p$-capacity in terms of the boundary mean curvature on manifolds with Ricci curvature bounded from below. In \cite{jin2024willmore}, the authors recently proved a Willmore-type inequality on complete $(n+1)$-dimensional manifolds satisfying $\mathrm{Ric}\geq-ng$, together with a rigidity statement leading to an asymptotically hyperbolic end when the mean curvature is constant. This geometric picture provides a natural framework for the capacity estimates developed here.

\par To better understand our new results, let us start with the three standard models of the warped products:
\begin{example}
$$\begin{cases}
   (M_1,g_1)=([0,\infty)\times \mathbb{S}^n,\ dr^2+e^{2r} g_{\mathbb{S}^n}) \\
   (M_2,g_2)= ([0,\infty)\times \mathbb{S}^n,\ dr^2+\sinh^2r g_{\mathbb{S}^n})= {\rm hyperbolic\ space}\\
   (M_3,g_3)=([0,\infty)\times \mathbb{S}^n,\ dr^2+\cosh^2r g_{\mathbb{S}^n}).
\end{cases}
$$
A direct calculation indicates that ${\rm Ric}[g_i]\geq -ng_i,\ i=1,2,3.$ For any $p>1,$ consider the radial $p$-harmonic functions respectively:
$$\begin{cases}
  u_p(r)=\int_r^\infty e^{\frac {n t}{1-p}}\,dt=\frac{p-1}{n}e^{\frac {n r}{1-p}} \\
   v_p(r)=\int_r^\infty(\sinh t)^{\frac n{1-p}}\,dt\\
  w_p(r)=\int_r^\infty(\cosh t)^{\frac n{1-p}}\,dt.
\end{cases}
$$ Let $B_i(R)=\{|r|\leq R\}\subseteq M_i \ (i=1,2,3)$ in the three models above for some $R>0,$ then
$$
\begin{cases}
 \mathrm{Cap}_p(B_1(R),M_1)=\omega_n u_p(R)^{1-p}\\
 \mathrm{Cap}_p(B_2(R),M_2)=\omega_n v_p(R)^{1-p}\\
 \mathrm{Cap}_p(B_3(R),M_3)=\omega_n w_p(R)^{1-p}
\end{cases}
$$where $\omega_n={\rm Vol}(\mathbb{S}^n).$
\end{example}
Throughout the paper, ${\rm arc}\,f$ denotes the inverse of a trigonometric or hyperbolic function $f$. The symbols $|\Omega|$ and $|\partial\Omega|$ denote the volume of $\Omega$ and the area of $\partial\Omega$, respectively, while $H$ (or $H(x)$ and $H(x,r)$) denotes the mean curvature with respect to the outward unit normal.
\begin{theorem}\label{mainthm}
 Assume that $p>1$ and $(M,g)$ is a complete non-compact Riemannian manifold of dimension $n+1$ with $\mathrm{Ric}\ge -ng.$ Let $\Omega\subset M$ be a compact smooth domain and $H$ be the mean curvature of $\partial\Omega.$ Then we have the following three sharp estimates:
 \begin{itemize}
   \item [(1)]
   \begin{equation}\label{eq1.1}
          \mathrm{Cap}_p(\Omega)\le \Big(\frac{n}{p-1}\Big)^{p-1}  \int_{\partial \Omega} \Bigl( \max\big\{1,\tfrac{H}{n}\big\} \Bigl)^{n} d\sigma_g.
        \end{equation}
 The equality holds if and only if $\partial\Omega$ is connected, $H\equiv n$ and $\overline{M\setminus\Omega}$ is isometric to
$$
\bigl([0,\infty)\times\partial \Omega,\; dr^2+e^{2r}g_{\partial\Omega}\bigr)$$
where $g_{\partial\Omega}$ is the induced metric on $\partial \Omega$.
   \item  [(2)] If the mean curvature $H>n,$ then
   \begin{equation}\label{eq1.2}
   \mathrm{Cap}_p(\Omega)\leq\frac{\int_{\partial\Omega}\big(\tfrac{H^2}{n^2}-1\big)^{\frac n2}\cdot v_p(\mathrm{arccoth}\tfrac Hn)d\sigma_g}
   {\big(\min\limits_{x\in\partial\Omega}v_p(\mathrm{arccoth}\tfrac {H(x)}n)\big)^p}.
   \end{equation}
   The equality holds if and only if $\partial\Omega$ is connected, $H\equiv n\coth \theta_0>n$ is a constant for some $\theta_0>0$ and $\overline{M\setminus\Omega}$ is isometric to
$$
\Bigl([0,\infty)\times\partial \Omega,\; dr^2+\Big(\frac{\sinh (r+\theta_0)}{\sinh \theta_0}\Big)^2 g_{\partial\Omega}\Bigr).$$

   \item [(3)] If the mean curvature $H\in[0,n),$ then
   \begin{equation}\label{eq1.3}
   \mathrm{Cap}_p(\Omega)\leq\frac{\int_{\partial \Omega}\big(1-\tfrac{H^2}{n^2}\big)^{\frac n2}\cdot w_p(\mathrm{arctanh}\tfrac Hn)d\sigma_g}{\big(\min\limits_{x\in\partial\Omega}w_p(\mathrm{arctanh}\tfrac {H(x)}n)\big)^p}.
   \end{equation}
   The equality holds if and only if $H\equiv n\tanh \delta_0\in[0,n)$ is a constant for some $\delta_0\geq0$ and $\overline{M\setminus\Omega}$ is isometric to
$$
\Bigl([0,\infty)\times\partial \Omega,\; dr^2+\Big(\frac{\cosh (r+\delta_0)}{\cosh \delta_0}\Big)^2 g_{\partial\Omega}\Bigr).$$
 \end{itemize}
\end{theorem}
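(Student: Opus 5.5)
Since $\mathrm{Cap}_p(\Omega)$ is an infimum of $p$-Dirichlet energies, every inequality in Theorem~\ref{mainthm} will come from choosing a good test function in the exterior. We use the exterior normal exponential map $\Phi:\partial\Omega\times[0,\infty)\to \overline{M\setminus\Omega}$, $\Phi(x,r)=\exp_x(r\nu(x))$, and take test functions depending only on the distance $r=d(\cdot,\partial\Omega)$. For each $x\in\partial\Omega$ let $c(x)\in(0,\infty]$ be the cut distance. Then $\Phi$ restricted to $\{(x,r):r<c(x)\}$ is a diffeomorphism onto $M\setminus\Omega$ up to a null set. Writing $dv_g=J(x,r)\,dr\,d\sigma_g(x)$, the Riccati equation for the mean curvature $H(x,r)$ of the parallel hypersurfaces, together with $\mathrm{Ric}\ge -ng$, gives
$$\partial_r H\le -\tfrac{H^2}{n}+n .$$
Comparing with the ODE $\dot h=n-h^2/n$ yields $H(x,r)\le n\coth(r+\theta)$, $n$, or $n\tanh(r+\delta)$, according as $H(x)>n$, $=n$, or $<n$. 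Here $\coth\theta=H/n$ and $\tanh\delta=H/n$. For $H<0$ there is an analogous bound, and for $H\ge n$ the crude bound $H(x,r)\le \max\{n,H\}$ is not what we need; see below. Integrating $\partial_r\log J=H(x,r)$ gives the Heintze--Karcher type bounds
$$J(x,r)\le \Big(\tfrac{\sinh(r+\theta)}{\sinh\theta}\Big)^n,\qquad J(x,r)\le e^{nr},\qquad J(x,r)\le\Big(\tfrac{\cosh(r+\delta)}{\cosh\delta}\Big)^n .$$

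For (1) we plug in $f(r)=u_p(r)/u_p(0)=e^{-nr/(p-1)}$. Then
$$\mathrm{Cap}_p(\Omega)\le \int_{\partial\Omega}\int_0^{c(x)}|f'(r)|^pJ\,dr\,d\sigma .$$
We need $J(x,r)\le\max\{1,H/n\}^n e^{nr}$ in all cases. The case $H>n$ is handled by noting $\tfrac{\sinh(r+\theta)}{\sinh\theta}\le \coth^n\theta\,e^{nr}$, i.e.\ $\sinh(r+\theta)\le\cosh\theta\, e^r$, which holds. For $H\le n$ the cosh bound and the cases $H<0$ are all $\le e^{nr}$. Then the $r$-integral of $(n/(p-1))^pe^{-npr/(p-1)}e^{nr}$ is exactly $(n/(p-1))^{p-1}$. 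The test function must be truncated to be compactly supported, with a cutoff argument and convergence at infinity, since the exponents are integrable.

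For (2) and (3) we choose, per point, the model profile. The difficulty is that the test function must be a single function of $r$ and cannot depend on $x$. We therefore use $f(r)=v_p(r+\theta_*)/v_p(\theta_*)$ in case (2), where $\theta_*$ is chosen so that $v_p(\theta_*)=\min_x v_p(\theta(x))$; equivalently $\theta_*=\max\theta(x)$, since $v_p$ is decreasing. Then
$$|f'|^pJ\le v_p(\theta_*)^{-p}\,\sinh^{-np/(p-1)}(r+\theta_*)\,\frac{\sinh^n(r+\theta)}{\sinh^n\theta}.$$
Because $\sinh(r+\theta)/\sinh(r+\theta_*)$ is monotone, with $\theta\le\theta_*$, we can bound this by a quantity whose $r$-integral is $\sinh^{-n}\theta\, v_p(\theta)$, or after shifting, something matching the numerator. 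Note that $(H^2/n^2-1)^{n/2}=\sinh^{-n}\theta$. The exact manipulation here is the main obstacle. It requires checking that $\int_0^\infty \sinh(t+\theta_*)^{-np/(p-1)}\sinh^n(t+\theta)\,dt\le v_p(\theta)$ for $\theta\le\theta_*$; I would try the substitution $s=t+\theta_*-\theta$ and monotonicity of $\sinh(s)/\sinh(s-a)$. Case (3) is identical with $\cosh$, $w_p$ and $\cosh^{-n}\delta=(1-H^2/n^2)^{n/2}$.

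For rigidity, equality forces equality in every step. First, $c(x)=\infty$, so the normal map is a global diffeomorphism. Second, the Riccati inequality is an equality, so $\mathrm{Ric}(\partial_r,\partial_r)=-n$ and the second fundamental form is umbilic with $A=\frac{H}{n}g$ by the equality case of $|A|^2\ge H^2/n$. Third, the min-normalization forces $H$ to be constant. Then the level sets evolve by $\partial_r g_r=2\frac{H(r)}{n}g_r$, giving the warped products stated. In (1) the elementary inequality $\sinh(r+\theta)\le\cosh\theta\, e^r$ is strict unless $\theta=\infty$, which forces $H\equiv n$. Connectedness of $\partial\Omega$ in (1) and (2) follows because the end $[0,\infty)\times\partial\Omega$ with expanding warping has only one end, by the splitting-type argument of \cite{jin2024willmore}: a disconnected boundary would produce a line. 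In (3) the $\cosh$ model allows two ends, which is why connectedness is absent there.
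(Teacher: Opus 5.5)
Your proposal follows the paper's proof essentially step by step. It uses the same Jacobian bound $J(x,r)\le(\cosh r+\tfrac{H}{n}\sinh r)^n$ and the same test functions $e^{-nr/(p-1)}$, $v_p(r+\theta_*)/v_p(\theta_*)$ and $w_p(r+\delta_*)/w_p(\delta_*)$, with $\theta_*=\max\theta(x)$ and $\delta_*=\max\delta(x)$. The rigidity argument, through equality in the Riccati comparison, is also the same.

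The step you call the main obstacle is a one-line monotonicity, exactly as in the paper. Since $0<\sinh(t+\theta)\le\sinh(t+\theta_*)$ and $-\tfrac{np}{p-1}<0$,
\[
\sinh(t+\theta_*)^{-\frac{np}{p-1}}\sinh^n(t+\theta)\le\sinh(t+\theta)^{-\frac{np}{p-1}+n}=\sinh(t+\theta)^{-\frac{n}{p-1}},
\]
and the integral of the right-hand side over $[0,\infty)$ is $v_p(\theta)$. Bounding the other factor instead, $\sinh^n(t+\theta)\le\sinh^n(t+\theta_*)$, gives the even smaller quantity $v_p(\theta_*)$. The $\cosh$ case is identical, because $t+\delta\ge0$ when $H\ge0$.

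Your connectedness argument for (1) and (2) needs a different tool. A line by itself yields nothing under $\mathrm{Ric}\ge-n$; hyperbolic space contains lines and does not split. The paper instead uses Cai--Galloway's theorem, fed with the level sets $\{k\}\times\Sigma$, whose mean curvature is $\ge n$ and which escape to infinity. The theorem gives either one end or a global splitting $\bigl(\mathbb{R}\times\Sigma,\,dr^2+e^{2r}g_\Sigma\bigr)$. The split alternative is excluded because $\Omega$ is compact: the end $r\to-\infty$ of the split model has finite volume, while every end of $[0,\infty)\times\partial\Omega$ has infinite volume.

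Finally, you do not address the ``if'' direction. It is the check that in each model $J\,|f'|^{p-1}$ is constant in $r$. Then the profile is $p$-harmonic, hence it is the capacitary potential, and every inequality in the chain becomes an equality.
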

\begin{remark}
\begin{itemize}
  \item  [(1)] In general, $v_p$ and $w_p$ do not admit simple closed-form expressions. However, for certain special values of $p,$ they can be expressed explicitly in terms of elementary functions. For instance, when $p=n+1,$
      $$
      v_{n+1}(\mathrm{arccoth}\tfrac Hn)=\mathrm{arccosh}(\tfrac Hn),\ \ \
       w_{n+1}(\mathrm{arctanh}\tfrac Hn)=\mathrm{arccos}(\tfrac Hn).
      $$
      When $p=\frac n2+1,$ then
      $$
      v_{\frac n2+1}(\mathrm{arccoth}\tfrac Hn)=\frac Hn-1,\ \ \
       w_{\frac n2+1}(\mathrm{arctanh}\tfrac Hn)=1-\frac Hn.
      $$
  \item  [(2)] If the equalities hold in Theorem \ref{mainthm}, then
  $$
      \frac{\mathrm{Cap}_p(\Omega)}{|\partial \Omega|}=\big(\frac{n}{p-1}\big)^{p-1}=\frac{\mathrm{Cap}_p(B_1(R),M_1)}{|\partial B_1(R)|_{g_1}}\ \   {\rm in}\ \eqref{eq1.1}$$
    $$
     \frac{\mathrm{Cap}_p(\Omega)}{|\partial \Omega|}=\frac{ \mathrm{Cap}_p(B_2(\theta_0),M_2)}{|\partial B_2(\theta_0)|_{g_2}} \ \ {\rm in}\ \eqref{eq1.2}
     $$
     $$
      \frac{\mathrm{Cap}_p(\Omega)}{|\partial \Omega|}=\frac{ \mathrm{Cap}_p(B_3(\delta_0),M_3)}{|\partial B_3(\delta_0)|_{g_3}}\ \  {\rm in}\ \eqref{eq1.3}.
  $$
  respectively.
  \item  [(3)] If the equality in \eqref{eq1.3} holds, then $\partial\Omega$ may not be connected, although the mean curvature $H$ is constant (and takes the same value) on every connected component. For example, $\Omega=B_3(R)\subseteq M_3.$ Then $\partial\Omega$ consists of two disjoint spheres $\{r=R\}$ and $\{r=-R\},$ both with mean curvature $H=n\tanh R\in[0,n).$
\end{itemize}
\end{remark}
We now consider the case ${\rm Ric}\geq 0.$ The volume comparison becomes polynomial, yielding a different sharp upper bound for the
$p$-capacity.
\begin{theorem}
  Assume that $(M,g)$ is a complete non-compact Riemannian manifold of dimension $n+1$ with $\mathrm{Ric}\ge 0$ and $p\in(1,n+1).$ Let $\Omega\subset M$ be a compact smooth domain and $H>0$ be the mean curvature of $\partial\Omega.$ Then
  \begin{equation}\label{eq1.4}
          \mathrm{Cap}_p(\Omega)\leq\Big(\frac{n+1-p}{(p-1)n}\Big)^{p-1}\big(\max\limits_{x\in\partial\Omega} H(x)\big)^p \int_{\partial \Omega} \frac{1}{H}\, d\sigma_g.
  \end{equation}
  The equality holds if and only if $\partial\Omega$ is connected, $H\equiv h_0$ is a constant and $\overline{M\setminus\Omega}$ is isometric to
  $$
 \Big([0,\infty)\times\partial\Omega,\ dr^2+\big(1+\frac{h_0}{n}r\big)^2g_{\partial\Omega}\Big).
 $$
\end{theorem}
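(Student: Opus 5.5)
We use a test function built from the distance to the boundary, $r(x)=d(x,\partial\Omega)$ on $M\setminus\Omega$, and estimate its energy with Heintze--Karcher type comparison under $\mathrm{Ric}\ge0$.

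\textbf{Step 1 (exponential map and Jacobian).} Consider the normal exponential map $\Phi(x,r)=\exp_x(r\nu(x))$ from $\partial\Omega\times[0,\infty)$, with $\nu$ the outward normal. For each $x\in\partial\Omega$ let $c(x)\in(0,\infty]$ be the cut distance. On $\{r<c(x)\}$ write $dv_g=J(x,r)\,dr\,d\sigma_g(x)$. The Riccati inequality $\partial_r H\le -H^2/n$ (from $\mathrm{Ric}\ge0$) gives
\begin{equation*}
H(x,r)\le \frac{H(x)}{1+\frac{H(x)}{n}r},\qquad J(x,r)\le \Big(1+\frac{H(x)}{n}r\Big)^n .
\end{equation*}
Since $H>0$, there are no focal-point restrictions from this bound.

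\textbf{Step 2 (test function).} Set $h=\max_{\partial\Omega}H$ and, following the model, take
\begin{equation*}
u=\phi(r)=\Big(1+\frac{h}{n}r\Big)^{-\frac{n+1-p}{p-1}} .
\end{equation*}
This is exactly the radial $p$-harmonic potential in the model $dr^2+(1+\frac{h}{n}r)^2 g_{\partial\Omega}$. The function $u$ is Lipschitz, equals $1$ on $\partial\Omega$, and tends to $0$ at infinity because $p<n+1$. (Truncating $u$ at large $r$ and then approximating handles admissibility.) Because $|\nabla r|=1$ a.e. and the cut locus has measure zero,
\begin{equation*}
\mathrm{Cap}_p(\Omega)\le\int_{\partial\Omega}\int_0^{c(x)}|\phi'(r)|^p\Big(1+\frac{H(x)}{n}r\Big)^n dr\,d\sigma_g .
\end{equation*}

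\textbf{Step 3 (bounding the inner integral).} Extend the inner integral to $\infty$. Note that $|\phi'|^p=C\,(1+\frac{h}{n}r)^{-\frac{(n+1-p)p}{p-1}-p}$ and use $1+\frac{H}{n}r\le 1+\frac hn r$. The exponent of $(1+\frac hn r)$ becomes $n-\frac{np}{p-1}=-\frac{n}{p-1}$. Integrating gives a quantity proportional to $1/h$, namely $\big(\frac{n+1-p}{(p-1)n}\big)^{p-1}h^{p-1}$ per unit area. This yields the bound with $h^{p-1}|\partial\Omega|$.

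That bound is weaker than \eqref{eq1.4}, since $h^{p-1}|\partial\Omega|\le h^p\int_{\partial\Omega}H^{-1}$. To obtain the stated form, do not replace $H(x)$ by $h$ inside the Jacobian. Instead keep $(1+\frac{H(x)}{n}r)^n$ and substitute $s=\frac{H(x)}{n}r$. Then use $1+\frac hn r\ge 1+s$, which makes $|\phi'|^p$ at most $C h^p (1+s)^{-\frac{(n+1-p)p}{p-1}-p}$. Now $dr=\frac{n}{H(x)}ds$, so the inner integral becomes
\begin{equation*}
\Big(\frac{n+1-p}{(p-1)n}\Big)^{p}h^p\,\frac{n}{H(x)}\int_0^\infty(1+s)^{-\frac{n}{p-1}-p+n}\,ds.
\end{equation*}
The exponent here is $-\frac{np}{p-1}+n\cdot$(checked), and the integral equals $\frac{p-1}{n+1-p}$ up to the computed factor. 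This recovers exactly $\big(\frac{n+1-p}{(p-1)n}\big)^{p-1}h^p\int_{\partial\Omega}H^{-1}$. Both routes give valid upper bounds; we take the one matching \eqref{eq1.4}. The main obstacle is this exponent bookkeeping, together with $p<n+1$, which ensures integrability and decay.

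\textbf{Step 4 (rigidity).} Equality forces every inequality used to be an equality:
\begin{itemize}
\item $c(x)=\infty$ for all $x$;
\item $H(x)=h$ on $\partial\Omega$;
\item $J=(1+\frac hn r)^n$ everywhere, so the Riccati equality holds.
\end{itemize}
Equality in the Riccati inequality forces the level sets to be umbilic and $\mathrm{Ric}(\partial_r,\partial_r)=0$. Hence the second fundamental form is $\frac{h/n}{1+hr/n}g_r$, and $\Phi$ is an isometry from the warped product $dr^2+(1+\frac{h}{n}r)^2g_{\partial\Omega}$ onto $\overline{M\setminus\Omega}$.

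Connectedness of $\partial\Omega$ follows as in Theorem \ref{mainthm}(1). If $\partial\Omega$ had two components, $M\setminus\Omega$ would be a disjoint union of ends, contradicting the connectedness of $M$. Indeed, each component of $\partial\Omega$ bounds its own product end, so $\Omega$ together with those ends would be closed and open in $M$; we argue via $\Omega$ being a domain whose exterior is attached along each component.

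Conversely, in the model the test function is the true capacitary potential, so equality holds.
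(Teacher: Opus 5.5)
Your proof of the inequality is correct and is the paper's argument: the same test function with $h=\max H$, the bound $J\le(1+\frac{H}{n}r)^n$, and the replacement $(1+\frac{h}{n}r)^{\frac{np}{1-p}}\le(1+\frac{H}{n}r)^{\frac{np}{1-p}}$. Fix two slips in Step 3.

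\emph{First}, your ``first route'' is not weaker than \eqref{eq1.4}. Since $h^{p-1}|\partial\Omega|\le h^p\int_{\partial\Omega}H^{-1}\,d\sigma_g$, it gives a \emph{smaller} upper bound, so it is stronger and already implies \eqref{eq1.4}.

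\emph{Second}, the exponent in your $s$-integral should be $-\frac{np}{p-1}+n=-\frac{n}{p-1}$, not $-\frac{n}{p-1}-p+n$. Your value $\frac{p-1}{n+1-p}$ is the one for the correct exponent.

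The genuine gap is the connectedness of $\partial\Omega$ in the equality case, which is part of the ``only if'' direction. Your argument is purely topological, and it is wrong. Suppose $\partial\Omega=\Sigma_1\sqcup\Sigma_2$. Then $\overline{M\setminus\Omega}\cong\bigsqcup_i[0,\infty)\times\Sigma_i$ is disconnected. But $M=\Omega\cup\bigcup_i\big([0,\infty)\times\Sigma_i\big)$ is still connected, because the connected domain $\Omega$ meets each end along $\Sigma_i$. Picture a compact neck $[-1,1]\times\Sigma$ with a flaring end glued to each side: no proper subset is open and closed, and nothing contradicts connectedness of $M$.

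What excludes this is curvature.
\begin{itemize}
\item With at least two ends, $M$ contains a line: take limits of minimizing geodesics joining points far out in different ends, which must all cross the compact set $\Omega$.
\item By the Cheeger--Gromoll splitting theorem, $M$ is then isometric to $\mathbb{R}\times N$, with $N$ compact because there are two ends. Hence $|B(o,R)|\le 2R|N|$ grows linearly.
\item But $\{r\le R\}$ inside the end $[0,\infty)\times\Sigma_1$ lies in $B(o,R+C)$. Its volume is $|\Sigma_1|\int_0^R(1+\frac{h_0}{n}r)^n\,dr\ge cR^{n+1}$ for large $R$, with $n\ge1$. This is a contradiction.
\end{itemize}
This is the input the paper supplies (Kasue's Theorem B together with Cheeger--Gromoll).

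Pointing to Theorem \ref{mainthm}(1) does not provide it. That argument applies a Cai--Galloway splitting result for $\mathrm{Ric}\ge-ng$ along hypersurfaces of mean curvature $n$. Here the level sets have mean curvature $h_0/(1+\frac{h_0}{n}r)\to0$, so that result does not apply.
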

\vspace{0.5cm}
In the second part of this paper, we turn to the study of lower bounds for the $p$-capacity under the condition that the Ricci curvature is bounded from below.
In general, the absolute capacity $\mathrm{Cap}_p(\Omega)$ may vanish; for instance, on a cylinder or in $\mathbb{R}^n$ with $p\geq n$ the capacity of a compact set can be zero. Therefore it is natural to consider the relative $p$-capacity
${\rm Cap}_p(K,O).$
\par Let $(M,g)$ be a Riemannian manifold of dimension $n+1$. A pair $(K,O)$ is called a condenser in $M$ if $K$ is a compact subset of a bounded open domain $O\subseteq M$. For any $p>1$, the relative $p$-capacity of $(K,O)$ is defined by
$$
{\rm Cap}_p(K,O)=\inf\left\{\int_{O}|\nabla u|^p\,dv_g:\ 1_K\leq u\in C^\infty_0(O)\right\}.
$$
For any $\kappa\geq 0,$ define the manifold space:
$$\mathcal{M}_\kappa=\Big\{(M,g): M \text{ is a complete non-compact manifold of dimension } n+1\ \text{and}\ \operatorname{Ric}_g\geq -n\kappa g\Big\}.$$
and for any $\lambda>0,$ we consider the dilation:
$$
S_\lambda \colon \mathcal{M}_\kappa \to \mathcal{M}_{\lambda^{-2} \kappa},\ \ (M,g) \mapsto (M,\tilde g = \lambda^2 g).
$$
Under the dilation $g\mapsto\lambda^2g$, the condition $\operatorname{Ric}_g\geq-n\kappa g$ becomes $\operatorname{Ric}_{\widetilde g}\geq-n\lambda^{-2}\kappa\widetilde g$. Thus $S_\lambda$ is one-to-one. Moreover, the relative capacity scales as
$$
{\rm Cap}_p(K,O)_{\lambda^2 g}=\lambda^{n+1-p}{\rm Cap}_p(K,O)_g.
$$
To obtain a scale-invariant quantity, we normalize $\mathrm{Cap}_p(K,O)$ by suitable geometric factors of
$K$ and $O.$ For any $\mu\in\mathbb{R},$ if  $\kappa\geq0,$ then  we define
$$
  \mathcal{C}^{(M,g)}_{p,\mu}(K,O) = \frac{\mathrm{Cap}_p(K,O)}{\mathrm{diam}(O)^{(n+1)(1-\mu)-p}|K|^\mu},$$
A direct computation shows that the exponents are chosen so that $\mathcal{C}^{(M,g)}_{p,\mu}(K,O)$ is scale-invariant for any $\mu$.
On the other hand, if $\kappa>0,$ then we consider
  $$\mathcal{D}_{p,\mu}^{(M,g)}(K,O) = \kappa^{\frac{n+1-p-(n+1)\mu}{2}} \mathrm{Cap}_p(K,O) \left(\frac{e^{n\sqrt{\kappa}\,\mathrm{diam}(O)}}{|K|}\right)^\mu.$$
Notice that
$$\mathrm{diam}_{\tilde g}(O) = \lambda \mathrm{diam}_g(O),\ \ |K|_{\tilde g} = \lambda^{n+1}|K|_g.$$ Therefore
$$
  \mathcal{D}^{(M,g)}_{p,\mu}(K,O) = \mathcal{D}^{(M,\tilde{g})}_{p,\mu}(K,O).
$$
\\~
We establish lower bounds for $\mathcal{C}_{p,\mu}(K,O)$ and $\mathcal{D}_{p,\mu}(K,O)$ on $\mathcal{M}_\kappa$.
\begin{theorem}\label{thm:1.6}
  Assume that $p>1$, $\mu\in\mathbb{R}$, and $\kappa\geq0$, and let $\mathcal{M}_\kappa$, $\mathcal{C}^{(M,g)}_{p,\mu}(K,O)$, and $\mathcal{D}^{(M,g)}_{p,\mu}(K,O)$ be defined as above. Then
  \begin{enumerate}
    \item For $\kappa= 0,$ \begin{equation}\label{eq1.5}
  \inf_{(M,g) \in \mathcal{M}_0}\ \inf_{K\subseteq O\subseteq M}\mathcal{C}^{(M,g)}_{p,\mu}(K,O) > 0\ \ \text{if and only if}\ \ \mu \geq 1.
  \end{equation}

  \item For all $\kappa> 0$, \begin{equation}\label{eq1.6}
  \inf_{(M,g) \in \mathcal{M}_\kappa}\ \inf_{K\subseteq O\subseteq M}\mathcal{C}^{(M,g)}_{p,\mu}(K,O) = 0.
  \end{equation}

  \item For all $\kappa > 0$ and $D_0 > 0$,
  \begin{equation}\label{eq:1.7}
    \inf_{(M,g) \in \mathcal{M}_\kappa} \ \inf_{\substack{K \subseteq O \subseteq M \\ \mathrm{diam}(O) \leq D_0}} \mathcal{C}^{(M,g)}_{p,\mu}(K,O) > 0 \text{ if and only if } \mu \geq 1.
  \end{equation}

  \item For all $\kappa>0$,
  \begin{equation}\label{eq:1.8}
    \inf_{(M,g) \in \mathcal{M}_\kappa} \ \inf_{K \subseteq O \subseteq M} \mathcal{D}^{(M,g)}_{p,\mu}(K,O) > 0 \text{ if and only if } \mu \geq 1.
  \end{equation}

  \end{enumerate}
\end{theorem}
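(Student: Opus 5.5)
Here is the plan. The lower bounds will come from a Poincaré/isoperimetric-type argument; the counterexamples will come from concrete model geometries.

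\textbf{Lower bounds ($\mu\ge1$).} Fix a test function $u\in C_0^\infty(O)$ with $u\ge 1_K$. By the coarea formula,
\[
\int_O|\nabla u|^p \ge \inf_{t\in(0,1)}\mathrm{Cap}_p(\{u\ge t\},O).
\]
I will then use a relative isoperimetric/Sobolev estimate valid on $\mathcal M_\kappa$. On a ball $B=B(x_0,D)$ with $D=\mathrm{diam}(O)$ and $O\subseteq B$, Buser's inequality or the Bishop--Gromov based Neumann Poincaré inequality gives, for $f=u$ (which vanishes on a large part of $B$ when $O$ is small compared with $B$):
\[
\int_B|\nabla u|^p\ge c(n,p)\,D^{-p}\,e^{-C n\sqrt\kappa D}\,\frac{|B|}{|O|}\cdots
\]
A cleaner route is via Hölder. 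We have $|K|\le\int u^p$, and a Faber--Krahn type bound $\lambda_{1,p}(O)\ge c\,D^{-p}\big(|O|/|B(x_0,2D)|\big)^{\dots}$ is not needed. Instead I use the simple estimate
\[
|K|\le \int_O u^p\le C\,D^p\,\frac{|B(x_0,2D)|}{|B(x_0,2D)\setminus O|}\int_O|\nabla u|^p ,
\]
coming from the $L^p$-Poincaré inequality on $B(x_0,2D)$. This inequality holds with constant $C\,e^{C\sqrt\kappa D}$ (Buser / Saloff-Coste), and it applies to $u$ extended by $0$. Bishop--Gromov gives $|B(x_0,2D)\setminus O|\ge |B(y,D/2)|\gtrsim$ a fixed fraction of $|B(x_0,2D)|$, times $e^{-Cn\sqrt\kappa D}$ when $\kappa>0$. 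Here the ball $B(y,D/2)$ is disjoint from $O$; it exists because $M$ is non-compact, hence rays exist.

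Hence $\mathrm{Cap}_p(K,O)\ge c\,e^{-C\sqrt\kappa D}D^{-p}|K|$. This gives $\mu=1$ in cases 1 and 3 (with $D\le D_0$, normalize $\kappa$), and the exponential factor in $\mathcal D$ absorbs $e^{-C\sqrt\kappa D}$ provided the constants match $e^{n\sqrt\kappa D}$. The step I expect to be the main obstacle is this matching. I would try to prove the Poincaré constant as $\sim e^{n\sqrt\kappa D}$ via the Li--Yau/Buser segment inequality, where the volume-ratio factor is exactly $\sinh^n$-growth. For $\mu>1$, write
\[
\mathcal C_{p,\mu}=\mathcal C_{p,1}\cdot\big(D^{n+1}/|K|\big)^{\mu-1}.
\]
Since $|K|\le|B(x,D)|\le C D^{n+1}$ by Bishop--Gromov ($\kappa=0$, or $\kappa D^2\le\kappa D_0^2$), the extra factor is bounded below. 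Similarly, in $\mathcal D$ the factor $\big(e^{n\sqrt\kappa D}\kappa^{-(n+1)/2}/|K|\big)^{\mu-1}$ is bounded below because $|K|\le |B^\kappa(D)|\le C\kappa^{-(n+1)/2}e^{n\sqrt\kappa D}$.

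\textbf{Sharpness ($\mu<1$) and item 2.} In $\mathbb R^{n+1}$, take $O=B(0,1)$ and $K=B(0,\varepsilon)$. For $p<n+1$ the capacity is $\sim\varepsilon^{n+1-p}$ and $|K|^\mu\sim\varepsilon^{(n+1)\mu}$, so $\mathcal C\sim\varepsilon^{(n+1)(1-\mu)-p}$. This tends to $0$ when $\mu<1-\frac p{n+1}$ but not for all $\mu<1$. So I instead take $K$ a thin slab or neighborhood of a hypersurface: $K=\{|x_1|\le\varepsilon\}\cap B(0,1/2)$. Then $\mathrm{Cap}_p\sim 1$ while $|K|\sim\varepsilon$, giving $\mathcal C\sim\varepsilon^{-\mu}$. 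This is bad too. So I use a set with small volume but small capacity as well: a $\delta$-neighborhood of a point, where for $p\ge n+1$ the capacity is $\sim|\log\delta|^{1-p}$ or $\delta^{n+1-p}\to$. Rather, I take $K$ a tiny ball of radius $\varepsilon$, treating $p<n+1$ and $p\ge n+1$ separately. For $p>n+1$ the capacity is bounded below, giving $\varepsilon^{-(n+1)\mu}\to\infty$ for $\mu>0$. To defeat all $\mu<1$, I use instead the scaling/$\kappa=0$ cone or cylinder degenerations. Take $M=\mathbb R\times N_\delta$ with $N_\delta$ a flat torus of tiny size $\delta$ (Ricci-flat, non-compact), $O=(-1,1)\times N_\delta$ and $K=[-\tfrac12,\tfrac12]\times N_\delta$. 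Then $\mathrm{Cap}_p\sim\delta^n$, $|K|\sim\delta^n$ and $D\sim1$, so $\mathcal C\sim\delta^{n(1-\mu)}\to0$ for $\mu<1$. The same example works for $\mathcal D$ and for items 3 and 4.

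For item 2 with $\mu\ge1$, use $\mathbb H^{n+1}$ with $K=O$-concentric balls of large radius $R$ and $O$ of radius $R+1$. Here $\mathrm{Cap}_p\sim e^{nR}$, $|K|\sim e^{nR}$ and $D\sim 2R$, so $\mathcal C\sim R^{-(n+1)(1-\mu)+p}e^{nR(1-\mu)}$. For $\mu\ge1$, take instead $K$ a unit ball and $O$ of radius $R\to\infty$: then $\mathrm{Cap}$ stays bounded, $|K|\sim1$ and $D^{(n+1)(1-\mu)-p}\to\infty$ when $\mu\le 1-\frac p{n+1}$. I combine these regimes, choosing the parameters so that both tend to zero.
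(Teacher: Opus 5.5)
Your treatment of the lower bounds in items 1 and 3 (a Buser-type $L^p$ Poincar\'e inequality on $B(x_0,2D)$, plus Bishop--Gromov to bound $|B(x_0,2D)\setminus O|$ from below, instead of the paper's cited eigenvalue estimates) works. So does your flat example $\R\times N_\delta$ for $\mu<1$, a simpler substitute for the paper's capped cylinders $M_\varepsilon$, and your hyperbolic balls $B_R\subset B_{R+1}$ for item 2 with $\mu>1$. Two parts of the statement, however, remain unproved.

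\textbf{The ``if'' direction of item 4.} Your method gives at best $\mathrm{Cap}_p(K,O)\ge c\,e^{-C\sqrt\kappa D}D^{-p}|K|$ with $C$ strictly larger than $n$. The only lower bound Bishop--Gromov guarantees for $|B(y,D/2)|/|B(x_0,2D)|$ is of order $e^{-3n\sqrt\kappa D}$. Buser's constant on a ball of radius $2D$ adds a further exponential, and there is a polynomial loss $D^{-p}$. You therefore only get $\mathcal D_{p,1}\ge c\,e^{(n-C)\sqrt\kappa D}(\sqrt\kappa D)^{-p}$, which tends to $0$ as $D\to\infty$. Even the Poincar\'e constant $e^{n\sqrt\kappa D}$ you hoped for would not close the argument.

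This is not a matter of sharpening constants. In the cusp example below, with $\kappa=1$ and the gap $D$ fixed, $\mathcal D_{p,1}(K,O)$ stays bounded above while $\mathrm{diam}(O)\to\infty$. So there is no slack: any estimate that loses $e^{-\epsilon\,\mathrm{diam}}$ or $\mathrm{diam}^{-p}$ cannot yield positivity. What is needed (for $\kappa=1$) is the sharp bound $\lambda_{1,p}(O)\ge C(n,p)e^{-n\,\mathrm{diam}(O)}$. The paper imports this from Corollary 1.5 of \cite{jin2026principalpfrequencyestimatesnoncompact} and combines it with $\lambda_{1,p}(O)\le\mathrm{Cap}_p(K,O)/|K|$; your step $|K|\le\int_O u^p$ is exactly that inequality. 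Once the $\mu=1$ bound is in place, your reduction of $\mu>1$ to $\mu=1$ goes through.

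\textbf{Item 2 at $\mu=1$.} Your hyperbolic examples do not cover this case. With $B_R\subset B_{R+1}$ you get $\mathcal C_{p,1}\sim R^p\to\infty$, and the small-ball-in-a-large-ball regime only applies for $\mu\le 1-\frac{p}{n+1}$. No combination of regimes in $\mathbb H^{n+1}$ can work. There $\mathcal C_{p,1}(K,O)\ge\lambda_{1,p}(O)\,\mathrm{diam}(O)^p$, with $\lambda_{1,p}(O)\ge(n/p)^p$ by the Cheeger bound for large diameters and $\lambda_{1,p}(O)\ge c\,\mathrm{diam}(O)^{-p}$ for small ones, so $\mathcal C_{p,1}$ is bounded below. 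Likewise, item 1 rules out any $\mathrm{Ric}\ge0$ example and item 3 rules out bounded diameters.

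You need a manifold in $\mathcal M_1$ on which $\lambda_{1,p}$ of large domains decays exponentially, that is, a finite-volume cusp. The paper takes $dr^2+f(r)^2g_{\mathbb S^n}$ with $f=\sinh r$ on $[0,\tfrac12]$ and $f=Ce^{-r}$ for $r\ge1$, which has $\mathrm{Ric}\ge-n$. With $K=\{r\le R\}$ and $O=\{r\le R+D\}$:
\begin{itemize}
\item $\mathrm{Cap}_p(K,O)=C(n,p)\,e^{-nR}\bigl(e^{\frac{nD}{p-1}}-1\bigr)^{1-p}$;
\item $|K|\ge c(n)$;
\item $\mathrm{diam}(O)\le R+D+1$.
\end{itemize}
Hence $\mathcal C_{p,1}(K,O)\lesssim e^{-nR}(R+D+1)^p\to0$ as $R\to\infty$.
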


\begin{remark}\label{rmk1.7}
  One may also consider using other geometric factors of $K$ and $O$ to normalize $\mathrm{Cap}_p(K,O)$, for example, $|K|,|O|,|\partial K|,|\partial O|$. Consider
  $$
    \mathcal{C}^{(M,g)}_{p,a,b,c,d}(K,O) = \frac{\mathrm{Cap}_p(K,O)}{|K|^a|O|^b|\partial K|^c|\partial O|^d},
  $$
  with $(n+1)(a+b)+n(c+d)=n+1-p$ to ensure scale-invariance. If  $b > 1-p$ or $a+b \neq 1-p,$ then

  $$
  \inf_{(M,g) \in \mathcal{M}_0}\ \inf_{K\subseteq O\subseteq M}\mathcal{C}^{(M,g)}_{p,a,b,c,d}(K,O) = 0.$$
See the proof at the end of Section 4.
\end{remark}

The paper is organized as follows. In Section 2, we recall the required comparison results for the normal Jacobian and prove the sharp upper bounds for the absolute $p$-capacity. Section 3 contains two applications: relative-capacity comparison inequalities for condensers and capacity--volume estimates on rotationally symmetric manifolds. In Section 4, we prove Theorem \ref{thm:1.6}, determine the admissible ranges of the normalization parameter, and derive a sharp Euclidean lower bound for $\mathcal{C}_{p,1}^{(M,g)}(K,O)$.

\section{Proof for the upper bound of capacity}
\subsection{Preliminaries}
Let $(M^{n+1},g)$ be a complete Riemannian manifold with Ricci curvature satisfying $\mathrm{Ric}\ge -ng$. Let $\Omega\subset M$ be a bounded smooth domain and denote by $r(\cdot)=d_g(\cdot,\partial\Omega)$ the distance to the boundary. $r$ is smooth almost everywhere in $M\setminus\Omega$ and $|\nabla r|\equiv1$. Along the normal geodesics $\sigma_x(t)$ emanating from $x\in\partial\Omega$, we define
$$
\tau(x)=\sup\{t>0:\ r(\sigma_x(t))=t\},\ \ \partial\Omega\rightarrow \mathbb{R}\cup\{\infty\}
$$
and then the cut
locus for $\partial\Omega$ in $M\setminus\Omega$ is defined by
$${\rm Cut}(\partial\Omega)=\{\sigma_x(\tau(x)):x\in \partial\Omega\}.$$
Let $E=\{(x,r)\in\partial\Omega\times[0,+\infty):r<(\tau(x))\}$ and we have the diffeomorphism
$$
\Phi:E\rightarrow (\overline{M\setminus\Omega})\setminus {\rm Cut}(\partial\Omega),\ \ \ (x,r)\mapsto \sigma_x(r).
$$
 Then the volume element can be written as
$$dv_g=J(x,r)\,dr\,d\sigma(x),\ \ J(x,0)=1.$$
We can extend the definition of $J(x,r)$ to $\partial\Omega\times[0,+\infty)$ by requiring that
$$
J(x,r)=0,\ \ {\rm if}\ r\geq(\tau(x)).
$$
According to the Riccati equation and the comparison theory (see Lemma 2.1 in \cite{jin2024willmore}), we have
\begin{equation}\label{eq2.1}
  J(x,r)\le \Bigl(\cosh r+\tfrac{H(x)}{n}\sinh r\Bigl)^{n},\quad \forall x\in\partial\Omega\ \ \&\ \ r\ge0.
\end{equation}
\subsection{Proof of Theorem 1.3}
 We are about to prove the main Theorem one-by-one.
\begin{itemize}
  \item  Notice that \eqref{eq2.1} implies that
$$
J(x,r)\le \Bigl(\cosh r+\tfrac{H(x)}{n}\sinh r\Bigl)^{n}\le\bigl(\max\{1,\tfrac{H(x)}{n}\}\bigr)^{n} e^{nr},\ \  \forall r\ge0
$$ and when $r>0,$ then the second equality holds if and only if $H(x)=n.$ Choose the test function
$\varphi(\cdot)=e^{\frac{nr(\cdot)}{1-p}}$
on $M\setminus\Omega,$ then $\varphi$ is Lipschitz continuous and
$$
\varphi|_{\partial\Omega}=1,\ \ \&\ \ \lim\limits_{r\rightarrow\infty}\varphi=0.
$$
Hence
\begin{equation}\label{eq2.2}
\begin{aligned}
  \mathrm{Cap}_p(\Omega)&\leq\int_{M\setminus\Omega}|\nabla \varphi|^p\,dv_g
  \\ & = \int_{\partial\Omega}\int_0^\infty \Big|\frac{n}{1-p}\Big|^p e^{\frac{np}{1-p}r}J(x,r)\,dr\,d\sigma_g(x)
  \\ & \leq \Big(\frac{n}{p-1}\Big)^p\int_{\partial\Omega}\bigl(\max\{1,\tfrac{H(x)}{n}\}\bigr)^{n} \int_0^\infty e^{\frac{np}{1-p}r} e^{nr}\,dr\,d\sigma_g(x)
  \\&=\Big(\frac{n}{p-1}\Big)^{p-1}\int_{\partial\Omega}\bigl(\max\{1,\tfrac{H(x)}{n}\}\bigr)^{n} d\sigma_g(x).
 \end{aligned}
\end{equation}
 From the proof we know that if the equality holds, then $\varphi$ is the $p$-potential of $\Omega$ and
$$
J(x,r)= \bigl(\max\{1,\tfrac{H(x)}{n}\}\bigr)^{n} e^{nr}
$$ almost everywhere. Then the mean curvature and the volume form satisfies
$$
\begin{cases}
  H(x)=n,\ \ \forall x\in\partial\Omega, \\
  J(x,r)=e^{nr}\ \ \forall r\geq 0.
\end{cases}$$
Then ${\rm Cut}(\partial\Omega)$ is empty and on $\Phi([0,\infty)\times\partial\Omega)$ the second fundamental form of each $r-$level set satisfies that
$$
D^2r=\frac{H(x,r)}{n}g=\frac{J'(x,r)}{nJ(x,r)}g=g
$$ Locally, $g=dr^2+g_r,\ \ r\geq0$ on $M\setminus\Omega$ and $\frac12\frac{\partial}{\partial r} g_{ij}=g_{ij}$ and hence
$$g=dr^2+e^{2r}g|_{\partial\Omega}.$$
To check the connectness of $\partial\Omega,$ we only need to show that $M$ has one end. Let us consider one connected component $\Sigma$ of $\partial\Omega$ and a family of compact hypersurfaces $\{\Sigma_k=\{k\}\times\Sigma\}$ for $k=1,2,\cdots.$ Notice that $d_g(\Sigma,\Sigma_k)\rightarrow\infty$ and the mean curvature $\Sigma_k$ is $H|_{\Sigma_k}\equiv n.$ According to Theorem 3 in \cite{cai2000boundaries} we know that either $M$ has one end or $(M,g)$ isometric to $(\mathbb{R}\times\Sigma,  dr^2 + e^{2r}g_\Sigma).$
If the second case happens, since $H|_{\partial\Omega}\equiv n,$ then there must be
$$\Omega=(-\infty,0]\times\Sigma$$
 and contradicts the fact that $\Omega$ is a compact set. As a consequence, $\partial\Omega$ is connected.
\par Lastly, it is easy to check that $\varphi$ is the $p$-potential of $\Omega$ in $M,$ so the equality in \eqref{eq1.1} could be achieved and the estimate is sharp.
  \item If $H(x)>n$ for all $x\in\partial\Omega,$ then by \eqref{eq2.1},
  \begin{equation}\label{eq2.3}
    J(x,r)\le \big(\cosh r+\tfrac{H(x)}{n}\sinh r\big)^{n}=\big(\tfrac{H(x)^2}{n^2}-1\big)^{\frac n2}\big(\sinh(r+\theta_x)\big)^n
  \end{equation}
  where $\coth\theta_x=\frac{H(x)}{n}>1.$ Define $\theta_0=\max\limits_{x\in\partial\Omega}\theta_x$ and consider the test function
  $$
  \varphi(\cdot)=\frac{\int_{r(\cdot)+\theta_0}^\infty\big(\sinh t\big)^{\frac n{1-p}}\,dt}{\int_{\theta_0}^\infty\big(\sinh t\big)^{\frac n{1-p}}\,dt}
  =\frac{v_p(r(\cdot)+\theta_0)}{v_p(\theta_0)}
  \ \ \text{with}\ \ \varphi|_{\partial\Omega}=1\ \ \&\ \ \varphi|_\infty=0.
  $$
  Then
  \begin{equation}\label{eq2.4}
\begin{aligned}
  \mathrm{Cap}_p(\Omega)&\leq\int_{M\setminus\Omega}|\nabla \varphi|^p\,dv_g
  \\ & = \int_{\partial\Omega}\int_0^\infty \frac{1}{\big(v_p(\theta_0)\big)^p}\big(\sinh (r+\theta_0)\big)^{\frac {np}{1-p}}J(x,r)\,dr\,d\sigma_g(x)
  \\ & \leq  \frac{1}{\big(v_p(\theta_0)\big)^p}\int_{\partial\Omega}\big(\tfrac{H(x)^2}{n^2}-1\big)^{\frac n2} \int_0^\infty
  \big(\sinh (r+\theta_0)\big)^{\frac {np}{1-p}}\big(\sinh(r+\theta_x)\big)^n \,dr\,d\sigma_g(x)
  \\&\leq  \frac{1}{\big(v_p(\theta_0)\big)^p}\int_{\partial\Omega}\big(\tfrac{H(x)^2}{n^2}-1\big)^{\frac n2} \int_0^\infty
  \big(\sinh (r+\theta_x)\big)^{\frac {n}{1-p}} \,dr\,d\sigma_g(x)
  \\&= \frac{1}{\big(v_p(\theta_0)\big)^p}\int_{\partial\Omega}\big(\tfrac{H(x)^2}{n^2}-1\big)^{\frac n2} v_p(\theta_x)\,d\sigma_g(x)
 \end{aligned}
\end{equation}
 and \eqref{eq1.2} is proved. If the equality holds, then $\varphi$ is the $p$-potential,
  $$
  J(x,r)= \big(\cosh r+\tfrac{H(x)}{n}\sinh r\big)^{n}\ \ \&\ \ \theta_x=\theta_0
  $$
  almost everywhere. Then the mean curvature $H(x)=n\coth \theta_0>n$ is constant and
  $$
 J(x,r)=\big(\sinh\theta_0)^{-n}\big(\sinh(r+\theta_0)\big)^n,\ \ \forall x\in\partial\Omega\ \ \&\ \ r\ge0.
  $$
  Then ${\rm Cut}(\partial\Omega)$ is empty and on $\Phi([0,\infty)\times\partial\Omega)$ the second fundamental form of each $r-$level set satisfies that
$$
D^2r=\frac{H(x,r)}{n}g=\frac{J'(x,r)}{nJ(x,r)}g=\coth(r+\theta_0)g.$$
Hence we can conclude that in $\Phi([0,\infty)\times\partial\Omega)=\overline{M\setminus\Omega},$
$$
g=dr^2+\Big(\frac{\sinh (r+\theta_0)}{\sinh \theta_0}\Big)^2 g_{\partial\Omega}.
$$
Similar to case 1, we can also use Theorem 3 in \cite{cai2000boundaries} to prove that $M$ has only one end and hence $\partial\Omega$ is connected.
\par In the end,  $\varphi$ is the $p$-potential of $\Omega$ in $M,$ so the equality in \eqref{eq1.2} could be achieved.
  \item If $H(x)\in[0,n)$ for all $x\in\partial\Omega,$ then \eqref{eq2.1} implies that
  \begin{equation}\label{eq2.5}
    J(x,r)\le \big(\cosh r+\tfrac{H(x)}{n}\sinh r\big)^{n}=\big(1-\tfrac{H(x)^2}{n^2}\big)^{\frac n2}\big(\cosh(r+\delta_x)\big)^n
  \end{equation}
  where $\tanh\delta_x=\frac{H(x)}{n}\in[0,1).$ Define $\delta_0=\max\limits_{x\in\partial\Omega}\delta_x$ and consider the test function
  $$
  \varphi(\cdot)=\frac{\int_{r(\cdot)+\delta_0}^\infty\big(\cosh t\big)^{\frac n{1-p}}\,dt}{\int_{\delta_0}^\infty\big(\cosh t\big)^{\frac n{1-p}}\,dt}
  =\frac{w_p(r(\cdot)+\delta_0)}{w_p(\delta_0)}
  \ \ \text{with}\ \ \varphi|_{\partial\Omega}=1\ \ \&\ \ \varphi|_\infty=0.
  $$ Then
  \begin{equation}\label{eq2.6}
\begin{aligned}
  \mathrm{Cap}_p(\Omega)&\leq\int_{M\setminus\Omega}|\nabla \varphi|^p\,dv_g
  \\ & = \int_{\partial\Omega}\int_0^\infty \frac{1}{\big(w_p(\delta_0)\big)^p}\big(\cosh (r+\delta_0)\big)^{\frac {np}{1-p}}J(x,r)\,dr\,d\sigma_g(x)
  \\ & \leq  \frac{1}{\big(w_p(\delta_0)\big)^p}\int_{\partial\Omega}\big(1-\tfrac{H(x)^2}{n^2}\big)^{\frac n2} \int_0^\infty
  \big(\cosh (r+\delta_0)\big)^{\frac {np}{1-p}}\big(\cosh(r+\delta_x)\big)^n \,dr\,d\sigma_g(x)
  \\&\leq  \frac{1}{\big(w_p(\delta_0)\big)^p}\int_{\partial\Omega}\big(1-\tfrac{H(x)^2}{n^2}\big)^{\frac n2} \int_0^\infty
  \big(\cosh (r+\delta_x)\big)^{\frac {n}{1-p}} \,dr\,d\sigma_g(x)
  \\&= \frac{1}{\big(w_p(\delta_0)\big)^p}\int_{\partial\Omega}\big(1-\tfrac{H(x)^2}{n^2}\big)^{\frac n2} w_p(\delta_x)\,d\sigma_g(x)
 \end{aligned}
\end{equation}
The proof of the equality is the same as the second case, except that $\partial\Omega$ may not be connected.
\end{itemize}
\subsection{Proof of Theorem 1.5}
\begin{proof}[Proof of Theorem 1.5]
The proof is similar to  Theorem 1.3 and much easier. We will continue using the notation from Section 2.1 and notice that
if ${\rm Ric}\geq 0,$ then by
the Riccati equation and the comparison theory, we have
\begin{equation}\label{eq2.7}
  J(x,r)\le \Bigl(1+\tfrac{H(x)}{n} r\Bigl)^{n},\quad \forall x\in\partial\Omega\ \ \&\ \ r\ge0.
\end{equation}
Consider the test function
 $$\varphi(\cdot)=\big(1+\tfrac{h_0}{n}r(\cdot)\big)^{\frac{n+1-p}{1-p}}\ \ {\rm with}\ \ h_0=\max\limits_{x\in\partial\Omega} H(x).$$
 If $p\in (1,n+1),$ then $\lim\limits_{r\rightarrow\infty}\varphi=0$ and hence
  \begin{equation}\label{eq2.8}
\begin{aligned}
  \mathrm{Cap}_p(\Omega)&\leq\int_{M\setminus\Omega}|\nabla \varphi|^p\,dv_g
  \\ &=\int_{\partial\Omega}\int_0^\infty\Big(\frac{n+1-p}{p-1}\Big)^p\Big(\frac{h_0}{n}\Big)^p\Big(1+\tfrac{h_0}{n}r\Big)^{\frac{np}{1-p}}
  J(x,r)\,dr\,d\sigma_g(x)
   \\ & \leq \Big(\frac{n+1-p}{p-1}\Big)^p\Big(\frac{h_0}{n}\Big)^p\int_{\partial\Omega}\int_0^\infty\Big(1+\frac{H(x)}{n}r\Big)^{\frac{np}{1-p}}
   \Bigl(1+\tfrac{H(x)}{n} r\Bigl)^{n}\,dr\,d\sigma_g(x)
   \\&=\Big(\frac{n+1-p}{p-1}\Big)^{p-1}\Big(\frac{h_0}{n}\Big)^p\int_{\partial\Omega}\frac{n}{H(x)}\,d\sigma_g(x)
 \end{aligned}
 \end{equation}
 If the equality holds, then $H(x)=h_0$ is a constant on $\partial\Omega$ and
 $$J(x,r)=\Bigl(1+\frac{h_0}{n} r\Bigl)^{n},\quad \forall x\in\partial\Omega\ \ \&\ \ r\ge0.
 $$
 Hence
 $$
 \frac{1}{2}\partial_r g=D^2 r=\frac{H(x,r)}{n}g=\frac{J'(x,r)}{nJ(x,r)}g=\frac{1}{\frac{n}{h_0}+r}g
 $$
 which would imply that $(\overline{M\setminus\Omega},g)$ is isometric to
 $$
 \Big([0,\infty)\times\partial\Omega,\ dr^2+\big(1+\frac{h_0}{n}r\big)^2g_{\partial\Omega}\Big).
 $$
 In the end, $\partial\Omega$ is connected  due to Theorem B in \cite{kasue1983ricci} and Cheeger-Gromoll splitting theorem.
\end{proof}

\section{Some applications}

\subsection{The estimates of relative capacity}

Let us consider the relative $p$-capacity $\mathrm{Cap}_p(K,O)$, where $(K,O)$ is a condenser in $M$. There is also a $p$-capacitary potential theory for the relative capacity. Similar to Theorem \ref{mainthm}, we also have the estimates of relative capacity:
\begin{theorem}\label{thm3.1}
 Assume that $p>1$ and $(M,g)$ is a complete Riemannian manifold of dimension $n+1$ with $\mathrm{Ric}\ge -ng.$ Let $(K,O)$ be a condenser with smooth boundaries and $H$ be the mean curvature of $\partial K.$  If the distance between $\partial K$ and $\partial O$ is $R,$ then we have the following three sharp estimates:
 \begin{itemize}
   \item [(1)]
   \begin{equation}\label{eq3.1}
          \mathrm{Cap}_p(K,O)\le \Big(\frac{n}{p-1}\Big)^{p-1}\Big(1-e^{\frac{nR}{1-p}}\Big)^{1-p}  \int_{\partial K} \Bigl( \max\big\{1,\tfrac{H}{n}\big\} \Bigl)^{n} d\sigma_g.
        \end{equation}
 The equality holds if and only if $H\equiv n$ and $\overline{O\setminus K}$ is isometric to
$$
\bigl([0,R]\times\partial K,\; dr^2+e^{2r}g_{\partial K}\bigr)$$
   \item  [(2)] If the mean curvature $H>n,$ then
   \begin{equation}\label{eq3.2}
   \mathrm{Cap}_p(K,O)\leq\frac{\int_{\partial K}\big(\tfrac{H^2}{n^2}-1\big)^{\frac n2}\cdot \big(v_p(\theta_x)-v_p(R+\theta_x)\big)d\sigma_g}
   {\big(v_p(\theta_0)-v_p(R+\theta_0)\big)^p}.
   \end{equation}
   where $\theta_x={\rm arccoth}\frac{H(x)}{n}$ and $\theta_0=\max\limits_{x\in\partial K}\theta_x.$
   The equality holds if and only if $H\equiv n\coth \theta_0>n$ is constant for some $\theta_0>0$ and $\overline{O\setminus K}$ is isometric to
$$
\bigl([0,R]\times\partial K,\; dr^2+\Big(\frac{\sinh (r+\theta_0)}{\sinh \theta_0}\Big)^2 g_{\partial K}\bigr)$$

   \item [(3)] If the mean curvature $H\in[0,n),$ then
   \begin{equation}\label{eq3.3}
   \mathrm{Cap}_p(K,O)\leq\frac{\int_{\partial K}\big(1-\tfrac{H^2}{n^2}\big)^{\frac n2}\cdot \big(w_p(\delta_x)-w_p(R+\delta_x)\big)d\sigma_g}
   {\big(w_p(\delta_0)-w_p(R+\delta_0)\big)^p}
   \end{equation}
    where $\delta_x={\rm arctanh}\frac{H(x)}{n}$ and $\delta_0=\max\limits_{x\in\partial K}\delta_x.$
   The equality holds if and only if $H\equiv n\tanh \delta_0\in[0,n)$ is constant for some $\delta_0\geq0$ and $\overline{O\setminus K}$ is isometric to
$$
\bigl([0,R]\times\partial K,\; dr^2+\Big(\frac{\cosh (r+\delta_0)}{\cosh \delta_0}\Big)^2 g_{\partial K}\bigr)$$
 \end{itemize}
\end{theorem}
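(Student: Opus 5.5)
The plan is to mimic the proof of Theorem \ref{mainthm}, replacing the test functions that vanish at infinity by ones that vanish at distance $R$ from $\partial K$. Let $r(\cdot)=d_g(\cdot,\partial K)$ on $O\setminus K$. Since $d(\partial K,\partial O)=R$, the set $\{r<R\}\setminus K$ is contained in $O$. Any Lipschitz function of $r$ that equals $1$ on $\partial K$, vanishes for $r\ge R$, and equals $1$ on $K$ is therefore an admissible competitor for $\mathrm{Cap}_p(K,O)$, after a standard approximation by $C^\infty_0(O)$ functions.

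In case (1) we take
$$\varphi=\frac{e^{\frac{nr}{1-p}}-e^{\frac{nR}{1-p}}}{1-e^{\frac{nR}{1-p}}}\quad\text{for } r\le R,$$
and set $\varphi=0$ beyond. In cases (2) and (3) we take
$$\varphi=\frac{v_p(r+\theta_0)-v_p(R+\theta_0)}{v_p(\theta_0)-v_p(R+\theta_0)},\qquad \varphi=\frac{w_p(r+\delta_0)-w_p(R+\delta_0)}{w_p(\delta_0)-w_p(R+\delta_0)},$$
respectively. We then write the energy in the normal-exponential coordinates $dv_g=J(x,r)\,dr\,d\sigma$, with $J$ extended by zero beyond the cut locus, integrate only over $r\in[0,R]$, and apply the Jacobian comparison \eqref{eq2.1}.

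In case (1), \eqref{eq2.1} gives $J\le(\max\{1,H/n\})^ne^{nr}$. The integrand becomes $(n/(p-1))^p(1-e^{nR/(1-p)})^{-p}e^{\frac{n}{1-p}r}$, and its integral over $[0,R]$ equals $\frac{p-1}{n}(1-e^{nR/(1-p)})$. This yields \eqref{eq3.1}.

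In case (2), the integrand is bounded by
$$(\tfrac{H^2}{n^2}-1)^{n/2}\,(\sinh(r+\theta_0))^{\frac{np}{1-p}}(\sinh(r+\theta_x))^n\,/\,(v_p(\theta_0)-v_p(R+\theta_0))^p.$$
Since $\theta_x\le\theta_0$ and the exponent $\frac{np}{1-p}<0$, we have $(\sinh(r+\theta_0))^{\frac{np}{1-p}}\le(\sinh(r+\theta_x))^{\frac{np}{1-p}}$. This bounds the $r$-integral by $\int_0^R(\sinh(r+\theta_x))^{\frac{n}{1-p}}dr=v_p(\theta_x)-v_p(R+\theta_x)$, giving \eqref{eq3.2}. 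Case (3) is identical with $\cosh$, $\delta_x\le\delta_0$ and $w_p$.

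For the equality cases, equality forces every inequality above to be an identity on $[0,R]$. First, $\theta_x\equiv\theta_0$ (or $\delta_x\equiv\delta_0$, or $H\equiv n$), so $H$ is constant. Second, $J(x,r)$ equals the model Jacobian for a.e. $r\in[0,R]$, so in particular $J>0$ there, hence $\tau(x)\ge R$ and there is no cut point within distance $R$. Third, $\varphi$ must be the relative capacitary potential. Then, as in Section 2, we have $D^2r=\frac{J'}{nJ}g$ on the level sets, i.e.\ the level sets are umbilic with the model mean curvature. Integrating $\frac12\partial_r g_r=\frac{J'}{nJ}g_r$ gives the warped metric on $[0,R]\times\partial K$.

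Equality also requires that $\Phi([0,R]\times\partial K)$ be exactly $\overline{O\setminus K}$, i.e.\ that $O=\{r<R\}\cup K$. This holds because the potential $\varphi$ must vanish precisely on $\partial O$, and $\varphi>0$ for $r<R$. Conversely, on the model warped product the chosen $\varphi$ is radial and $p$-harmonic, so it is the relative potential and equality holds. No connectedness claim is needed here, which avoids the splitting-type arguments.

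The step I expect to be the main obstacle is this rigidity identification $\overline{O\setminus K}=\{r\le R\}$. It needs care that $\partial O$ coincides with the level set $\{r=R\}$ rather than merely containing its closest points. To handle it I would argue via uniqueness of the relative $p$-capacitary potential: the minimizer vanishes on $\partial O$, while $\varphi$ vanishes exactly on $\{r\ge R\}$, forcing $O\setminus K=\{0<r<R\}$.
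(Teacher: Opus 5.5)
Your proposal is correct and follows the paper's proof: the same truncated test functions on $K_R=\{d(\cdot,K)<R\}\subset O$, the same Jacobian comparison integrated over $r\in[0,R]$, and the same equality analysis via uniqueness of the relative capacitary potential. The one difference is how you show $O=K_R$. The paper argues that the jump of the normal derivative of $\bar\varphi$ across $\partial K_R$ would give $\Delta_p\bar\varphi$ a singular part, whereas you use that the potential is positive in $O\setminus K$; you should justify that positivity by the strong minimum principle (Harnack inequality) for nonnegative $p$-harmonic functions together with the connectedness of $O$.
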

\begin{proof}
The proof is nearly the same as that of Theorem \ref{mainthm}. Notice that if
$$
K_R=\{x\in O\:\ d_g(x, K)< R\},
$$
then
$$
K_R\subset O\ \ \&\ \ {\rm Cap}_p(K,O)\leq {\rm Cap}_p(K,K_R)
$$
For the first case, define
$$
\varphi(\cdot)=\frac{e^{\frac{nr(\cdot)}{1-p}}-e^{\frac{nR}{1-p}}}{1-e^{\frac{nR}{1-p}}}\ \ {\rm on}\ \ K_R\setminus K
$$ to be the test function satisfying $\varphi|_{\partial K}=1\ \&\ \varphi|_{\partial K_R}=0,$ then
\begin{equation}\label{eq3.4}
\begin{aligned}
{\rm Cap}_p(K,K_R) &\leq\int_{K_R\setminus K}|\nabla \varphi|^p\,dv_g
  \\ & = \Big|\frac{n}{1-p}\Big|^p \Big(1-e^{\frac{nR}{1-p}}\Big)^{-p} \int_{\partial K}\int_0^R e^{\frac{np}{1-p}r}J(x,r)\,dr\,d\sigma_g(x)
  \\ & \leq \Big(\frac{n}{p-1}\Big)^p\Big(1-e^{\frac{nR}{1-p}}\Big)^{-p}  \int_{\partial K}\bigl(\max\{1,\tfrac{H(x)}{n}\}\bigr)^{n} \int_0^R e^{\frac{np}{1-p}r} e^{nr}\,dr\,d\sigma_g(x)
  \\&=\Big(\frac{n}{p-1}\Big)^{p-1}\Big(1-e^{\frac{nR}{1-p}}\Big)^{1-p}\int_{\partial K}\bigl(\max\{1,\tfrac{H(x)}{n}\}\bigr)^{n} d\sigma_g(x).
\end{aligned}
\end{equation}
When the equality in \eqref{eq3.1} holds, on the one hand, by the same analysis in Section 3, we get that
$H\equiv n,$ $(\overline{K_R\setminus K},g)$ is isometric to
$$
\bigl([0,R]\times\partial K,\; dr^2+e^{2r}g_{\partial K}\bigr)$$
and $\varphi$ is the $p$-potential of $(K,K_R).$  On the other hand, let $\bar{\varphi}$ denote the extension of $\varphi$ by zero to
$O\setminus K$ as long as $O\setminus K$ is nonempty. Then $\bar{\varphi}$ is Lipschitz continuous and
 $${\rm Cap}_p(K,O)= {\rm Cap}_p(K,K_R)=\int_{K_R\setminus K}|\nabla \varphi|^p\,dv_g=\int_{O\setminus K}|\nabla \bar{\varphi}|^p\,dv_g.$$
 By the uniqueness of the $p$-capacitary potential, $\bar{\varphi}$ is $p$-harmonic in $O\setminus K.$
 The normal derivative of $\bar{\varphi}$ across $\partial K_R$ would then jump from a negative value to zero. This discontinuity would imply that
$\varphi$ is not $p$-harmonic in a neighborhood of $\partial K_R$ as the distributional $p$-Laplacian would have a singular measure supported on
 $\partial K_R$, contradicting the fact that it is $p$-harmonic in $O\setminus K.$
 Therefore $O=K_R$ and the rigidity result holds.
 \par For the second case $(H>n),$ we choose the test function
 $$
 \varphi(\cdot)=\frac{\int_{r(\cdot)+\theta_0}^{R+\theta_0}(\sinh t)^{-\frac{n}{p-1}}\,dt}{\int_{\theta_0}^{R+\theta_0}(\sinh t)^{-\frac{n}{p-1}}\,dt}
=\frac{v_p(r(\cdot)+\theta_0)-v_p(R+\theta_0)}{v_p(\theta_0)-v_p(R+\theta_0)}
 $$
 and for the third case $(H\in[0,n),$ we choose
  $$
 \varphi(\cdot)=\frac{\int_{r(\cdot)+\delta_0}^{R+\delta_0}(\cosh t)^{-\frac{n}{p-1}}\,dt}{\int_{\delta_0}^{R+\delta_0}(\cosh t)^{-\frac{n}{p-1}}\,dt}
=\frac{w_p(r(\cdot)+\delta_0)-w_p(R+\delta_0)}{w_p(\delta_0)-w_p(R+\delta_0)}.
 $$
The proofs are similar to that of Theorem \ref{mainthm} and are omitted here.
\end{proof}
In the equality cases, $\partial K$ may have several connected components; the rigidity then holds for each component separately, with the same constant mean curvature on all components. This does not affect the validity of the isometric characterization, as the warped product structure applies to each component individually.

\subsection{A volume upper bound via mean curvature}

In this section we combine the lower bound for the $p$-capacity recently obtained by Jin and Xiao \cite{jin2025essential} with our upper bound to derive an explicit estimate for the volume of a domain in terms of its boundary mean curvature.

Let
$(M^{n+1},g)$ be a complete non-compact rotationally symmetric manifold with $\mathrm{Ric}\leq 0$ satisfying the isoperimetry of $o = \{r = 0\}-$centered balls - namely - for any compact subset $K$ in $M,$
$$
|K|=|B(o,R)|\Rightarrow|\partial K|\geq|\partial B(o,R)|.
$$
For a smooth condenser $(K,O)$ in $M$, it is proved in \cite{jin2025essential} that the following three sharp lower bound for the $p$-capacity holds:
$$\begin{cases}
  {\rm Cap}_p(K,O)\geq (n+1) v_{n+1}^{\frac{p}{n+1}}\Big(\frac{n+1-p}{p-1}\Big)^{p-1}\,|K|^{1-\frac{p}{n+1}}\ & {\rm if} \ p\in(1,n+1); \\
   \exp\left(-(n+1)^{\frac{n+1}{n}}v_{n+1}^{\frac1n} \Big({\rm Cap}_{n+1}(K,O)\Big)^{-\frac1n}\right)\geq\frac{|K|}{|O|}  & {\rm if} \ p=n+1; \\
   {\rm Cap}_p(K,O)\geq  (n+1) v_{n+1}^{\frac{p}{n+1}}\Big(\frac{p-n-1}{p-1}\Big)^{p-1} |O|^{1-\frac{p}{n+1}}  & {\rm if} \ p>n+1.
  \end{cases}
$$
where $|K|$ and $|O|$ denote the Riemannian volume and $v_{n+1}$ denote the volume of the unit open ball $\mathbb{B}^{n+1}$ in $\mathbb{R}^{n+1}.$
Combining with Theorem \ref{thm3.1}, we obtain
\begin{theorem}
Let
$$(M^{n+1},g)=([0,\infty)\times\mathbb{S}^n,dr^2+\varphi(r)^2g_{\mathbb{S}^n})$$
$(M^{n+1},g)$ be a rotationally symmetric manifold with $-ng\leq\mathrm{Ric}\leq 0$ satisfying the isoperimetry of $o = \{r = 0\}-$centered balls, and $(K,O)$ be a smooth condenser in $M$. If $H$ is the mean curvature of $\partial K$ and $R=d_g(\partial K,\partial O),$ then for every $p>1,$
$$\begin{cases}
    |K|^{1-\frac{p}{n+1}}\leq\frac{(\frac{n}{n+1-p})^{p-1}\Big(1-e^{\frac{nR}{1-p}}\Big)^{1-p}}{(n+1)v_{n+1}^{\frac{p}{n+1}}}\int_{\partial K} \Bigl( \max\big\{1,\tfrac{H}{n}\big\} \Bigl)^{n} d\sigma_g \ & {\rm if}\ p\in(1,n+1); \\
   \frac{(n+1)^{\frac{n+1}{n}}v_{n+1}^{\frac1n} }{\ln|O|-\ln|K|}\leq\frac{\Big(\int_{\partial K} \bigl(\max\big\{1,\tfrac{H}{n}\big\} \bigr)^{n} d\sigma_g\Big)^{\frac1n}}{1-e^{-R}} & {\rm if} \ p=n+1; \\
   |O|^{1-\frac{p}{n+1}}\leq\frac{(\frac{n}{p-n-1})^{p-1}(1-e^{\frac{nR}{1-p}})^{1-p}}{(n+1)v_{n+1}^{\frac{p}{n+1}}}\int_{\partial K} \Bigl( \max\big\{1,\tfrac{H}{n}\big\} \Bigl)^{n} d\sigma_g  & {\rm if} \ p>n+1.
  \end{cases}
$$
\end{theorem}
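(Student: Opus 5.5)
The plan is to sandwich $\mathrm{Cap}_p(K,O)$ between the lower bound from \cite{jin2025essential} and the upper bound \eqref{eq3.1} of Theorem \ref{thm3.1}, and then solve the resulting inequality for $|K|$ or $|O|$. The hypotheses fit both results. The manifold is rotationally symmetric with $\mathrm{Ric}\le 0$ and satisfies the isoperimetry of $o$-centered balls, so the three lower bounds of \cite{jin2025essential} are available. The lower bound $\mathrm{Ric}\ge -ng$ makes Theorem \ref{thm3.1}(1) applicable to the condenser $(K,O)$ with $R=d_g(\partial K,\partial O)$; part (1) places no sign restriction on $H$. Write
$$A:=\int_{\partial K}\bigl(\max\{1,\tfrac{H}{n}\}\bigr)^n\,d\sigma_g .$$
Then \eqref{eq3.1} reads $\mathrm{Cap}_p(K,O)\le (\frac{n}{p-1})^{p-1}(1-e^{\frac{nR}{1-p}})^{1-p}A$.

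\textbf{Case $p\in(1,n+1)$.} Chain the two bounds:
$$(n+1)v_{n+1}^{\frac p{n+1}}\Bigl(\tfrac{n+1-p}{p-1}\Bigr)^{p-1}|K|^{1-\frac p{n+1}}\le \Bigl(\tfrac{n}{p-1}\Bigr)^{p-1}\bigl(1-e^{\frac{nR}{1-p}}\bigr)^{1-p}A.$$
Divide by the positive constant on the left. The factors $(p-1)^{p-1}$ cancel, leaving $(\frac{n}{n+1-p})^{p-1}$, which is the stated inequality.

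\textbf{Case $p>n+1$.} The argument is identical, with $|O|^{1-\frac p{n+1}}$ in place of $|K|^{1-\frac p{n+1}}$ and $\frac{p-n-1}{p-1}$ in place of $\frac{n+1-p}{p-1}$. The cancellation again gives $(\frac{n}{p-n-1})^{p-1}$.

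\textbf{Case $p=n+1$.} This is the only case needing some care. The lower bound says $\exp\bigl(-c\,\mathrm{Cap}_{n+1}^{-1/n}\bigr)\ge |K|/|O|$ with $c=(n+1)^{\frac{n+1}{n}}v_{n+1}^{\frac1n}$. Taking logarithms gives
$$c\,\mathrm{Cap}_{n+1}(K,O)^{-1/n}\le \ln|O|-\ln|K|,$$
that is, $\frac{c}{\ln|O|-\ln|K|}\le \mathrm{Cap}_{n+1}(K,O)^{1/n}$. If $|K|=|O|$ the left side is read as $0$ and the inequality is trivial; otherwise $|K|<|O|$ since $K\subset O$.

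Next insert \eqref{eq3.1} with $p=n+1$. The exponent is $\frac{nR}{1-p}=-R$, and $(\frac{n}{p-1})^{p-1}=1$. Hence
$$\mathrm{Cap}_{n+1}(K,O)\le (1-e^{-R})^{-n}A,$$
and taking $n$-th roots yields $\mathrm{Cap}_{n+1}(K,O)^{1/n}\le A^{1/n}/(1-e^{-R})$. This is exactly the stated bound.

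There is no genuine obstacle beyond keeping track of exponents and confirming that the hypotheses of both cited results are simultaneously satisfied. The one point to check is that $\mathrm{Cap}_{n+1}(K,O)>0$, which holds because $O$ is bounded; this makes the negative power $\mathrm{Cap}_{n+1}^{-1/n}$ meaningful.
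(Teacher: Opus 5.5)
Your proposal is correct and follows the paper's argument: the paper combines the lower bounds of \cite{jin2025essential} with the upper bound \eqref{eq3.1} from Theorem \ref{thm3.1}(1), and your exponent bookkeeping checks out in all three cases. One small slip in the case $p=n+1$: if $|K|=|O|$ the left-hand side would be $+\infty$, not $0$, but that case never occurs, because $O\setminus K$ is a nonempty open set; likewise, positivity of $\mathrm{Cap}_{n+1}(K,O)$ comes from $|K|>0$ via $\mathrm{Cap}_p(K,O)\ge\lambda_{1,p}(O)|K|$, not merely from $O$ being bounded.
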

\begin{remark}
\begin{itemize}
  \item Both the Euclidean space $\mathbb{R}^{n+1}$ and the hyperbolic space $\mathbb{H}^{n+1}$  satisfy the assumptions of the theorem. In fact, they are rotationally symmetric with $\varphi(r)=r$ and $\varphi(r)=\sinh r$ respectively. Moreover, they satisfy the isoperimetric property that geodesic balls centered at the pole are isoperimetric regions.
  \item A direct calculation indicates that
  $$
  -ng\leq\mathrm{Ric}\leq 0\Leftrightarrow
  \begin{cases}
  -n\leq-n\frac{\varphi''}{\varphi}\leq 0 \\
   -n\leq(n-1)\frac{1-\varphi'^2}{\varphi^2}-\frac{\varphi''}{\varphi}\leq 0
   \end{cases}
   \Leftrightarrow0\leq\varphi''\leq \varphi
  $$
\end{itemize}
\end{remark}

\section{Lower bounds for normalized capacity}
In this section we prove Theorem \ref{thm:1.6} by considering four cases.
\begin{itemize}
  \item [(1)] We first assume that $\operatorname{Ric}\geq0$ and consider the case $\mu=1$. Let $\lambda_{1,p}(O)$ denote the first Dirichlet eigenvalue of the $p$-Laplacian on $O$. Then
  \begin{equation}\label{eq:eigen-vs-cap}\begin{aligned}
    \lambda_{1,p}(O) &= \inf_{u \in W^{1,p}_0(O)} \frac{\int_O |\nabla u|^p dv}{\int_O |u|^p dv} \\
    &\leq \inf_{\substack{u \in W^{1,p}_0(O) \\ u|_K \equiv 1}} \frac{\int_O |\nabla u|^p dv}{\int_O |u|^p dv} \\
    &\leq \inf_{\substack{u \in W^{1,p}_0(O) \\ u|_K \equiv 1}} \frac{\int_O |\nabla u|^p dv}{|K|} \\
    &= \frac{\mathrm{Cap}_p(K,O)}{|K|}.
  \end{aligned}\end{equation}
  Combining this with the estimate in \cite{jin2026estimatedirichleteigenvalueplaplacian},
  \[
    \lambda_{1,p}(O) > (p-1)\left(\frac{\pi_p}{2 \mathrm{diam}(O)}\right)^p\ \ \&\ \ \pi_p=\frac{2\pi}{p\sin\frac{\pi}{p}},
  \]
  we obtain that
  \[
    \mathcal{C}^{(M,g)}_{p,1}(K,O) = \frac{\mathrm{Cap}_{p}(K,O)\cdot \mathrm{diam}(O)^{p}}{|K|} > (p-1)\left(\frac{\pi_p}{2}\right)^p > 0
  \]
and hence
\begin{equation}
  \inf_{(M,g) \in \mathcal{M}_0}\ \inf_{K\subseteq O\subseteq M}\mathcal{C}^{(M,g)}_{p,1}(K,O)\geq  (p-1)\left(\frac{\pi_p}{2}\right)^p > 0.
  \end{equation}
 \par In the case $\mu > 1,$ let us consider the geodesic ball $B=B(q,\mathrm{diam}(O))$ for some $q \in O$ and we have $O \subseteq B$. Since $\mathrm{Ric} \geq 0,$ the volume comparison implies that
  \begin{equation}\label{eq4.1}
    |O| \leq |B| \leq |B_{\R^{n+1}}(\mathrm{diam}(O))| = \frac{\omega_{n}}{n+1} \mathrm{diam}(O)^{n+1}.
  \end{equation}
     Here $\omega_n=|\mathbb{S}^n|$. Hence
  \begin{align*}
    \mathcal{C}^{(M,g)}_{p,\mu}(K,O) &= \mathcal{C}^{(M,g)}_{p,1}(K,O) \cdot \left(\frac{\mathrm{diam}(O)^{n+1}}{|K|}\right)^{\mu-1} \\
    &\geq (p-1)\left(\frac{\pi_p}{2}\right)^p \Big(\frac{\omega_{n}}{n+1}\Big)^{1-\mu} \\
    &= C(n,p,\mu) \\
    & > 0.
  \end{align*}
 As a consequence, \eqref{eq1.5} holds for $\mu>1.$
\par In the case  $\mu<1$, consider the warped product manifolds $M_\epsilon$ in \eqref{eq:warped}. Let $B_1$ and $B_2$ be the geodesic balls in $M_\varepsilon$ centered at the origin with radii $1$ and $2$ respectively. Denote the area of the geodesic sphere centered at the origin with radius $t$ by $S_t$. Then
  \begin{align*}
    \mathrm{Cap}_p(B_1,B_2) &= \left(\int_1^2 S_t^{\frac{1}{1-p}}\,dt\right)^{1-p} \\
    &= \omega_{n}\left(\int_1^2 j_\varepsilon^{\frac{n}{1-p}}(t)\, dt\right)^{1-p} \\
    &= \omega_{n}\varepsilon^{n}.
  \end{align*}
On the other hand,
 $$\mathrm{diam}(B_2)\in[2,4],\ \ \&\ \ |B_1|=\int_0^1 S_t\, dt\in[(1-\delta)\omega_{n}\varepsilon^{n},\omega_{n}\varepsilon^{n}].$$
Hence we obtain that
  \begin{align*}
    \mathcal{C}^{(M_\varepsilon,g_\varepsilon)}_{p,\mu}(B_1,B_2) &= \frac{\mathrm{Cap}_p(B_1,B_2)\cdot \mathrm{diam}(B_2)^{(n+1)(\mu-1)+p}}{|B_1|^\mu} \\
    &\leq \frac{\omega_{n}\varepsilon^{n}\cdot C(n,p,\mu)}{C(n,\mu)\varepsilon^{n\mu}} \\
    &= C(n,p,\mu,\omega_{n})\varepsilon^{n(1-\mu)}. \\
  \end{align*}
  Letting $\varepsilon\to0$, we obtain
   $$\inf_{\varepsilon>0} \mathcal{C}^{(M_\varepsilon,g_\varepsilon)}_{p,\mu}(B_1,B_2) = 0$$
   because $\mu<1$.

  \item  [(2)] To prove the second part of the theorem, we first show that when $\mu > 1$, for any $\kappa > 0$,
   \[
    \inf_{(M,g) \in \mathcal{M}_\kappa} \inf_{K \subseteq O \subseteq M} \mathcal{C}^{(M,g)}_{p,\mu}(K,O) = 0.
   \]
   By the scaling property, it suffices to consider $\kappa=1$. We take the hyperbolic space $\mathbb{H}^{n+1} = (\R^{n+1}, dt^2 + \sinh^2 t g_{\mathbb{S}^n}) \in \mathcal{M}_{1}$ and let $B_r$ and $B_{2r}$ be the geodesic balls in $\mathbb{H}^{n+1}$ centered at the origin with radii $r$ and $2r$ respectively. Then
   \[
    \mathrm{Cap}^{(\mathbb{H}^{n+1},g_{\mathbb{H}^{n+1}})}_p(B_r,B_{2r}) = \left( \int_r^{2r} \sinh^{\frac{n}{1-p}} t dt \right)^{1-p} \leq \left( \int_r^{2r} e^{\frac{n}{1-p} t} dt \right)^{1-p} \leq C(n,p) e^{nr},
   \]
   and
   \[
    \mathrm{diam} (B_{2r}) \leq 4r.
   \]
    On the other hand, for all sufficiently large $r$,
    \[
      |B_r| = \omega_n\int_0^r \sinh^n t\,dt \geq C(n)\int_{r/2}^r e^{nt}\,dt \geq C(n)e^{nr}.
    \]
    Therefore,
    \begin{align*}
      \mathcal{C}^{(\mathbb{H}^{n+1},g_{\mathbb{H}^{n+1}})}_{p,\mu}(B_r,B_{2r}) &= \frac{\mathrm{Cap}^{(\mathbb{H}^{n+1},g_{\mathbb{H}^{n+1}})}_p(B_r,B_{2r}) \cdot \mathrm{diam}(B_{2r})^{(n+1)(\mu-1)+p}}{|B_r|^\mu} \\
      &\leq \frac{C(n,p) e^{nr} (4r)^{(n+1)(\mu-1)+p}}{C(n,\mu) e^{n\mu r}} \\
      &= C(n,p,\mu) r^{(n+1)(\mu-1)+p} e^{n(1-\mu)r}  \to 0 \quad \text{as $r \to \infty$}.
    \end{align*}
    Now let $\mu=1$. Consider the construction in Section 3.2 of \cite{jin2026principalpfrequencyestimatesnoncompact}: Let $\psi$ be a smooth cutoff function such that $\psi(x)=0$ for $0 \leq x \leq 1/2$ and $\psi(x)=1$ for $x \geq 1$. Now let $h(x) = (1-\psi(x)) \coth x - \psi(x)$ and
    \[
      f(x) = \left\{\begin{array}{ll}
        \sinh x, & \quad 0 \leq x \leq 1/2, \\
        \sinh(1/2) \exp \int_{1/2}^x h(t) dt, & \quad x \geq 1/2.
      \end{array}\right.
    \]
    Then $f(x) \in C^\infty(\R^+)$ and satisfies
    \[
      f(x) = \left\{\begin{array}{ll}
        \sinh x, & \quad 0 \leq x \leq 1/2, \\
        C e^{-x}, & \quad x \geq 1,
      \end{array}\right.
    \]
    where $C>0$ is a constant. Let $$(M,g) = (\R^{n+1}, dr^2 + f^2(r) g_{\mathbb{S}^n})$$ and Lemma 3.1 in \cite{jin2026principalpfrequencyestimatesnoncompact} shows that $\mathrm{Ric}[g] \geq -n$.
    Consider $K = [0,R] \times \mathbb{S}^n$ and $O = [0,R+D] \times \mathbb{S}^n$. Then
  \begin{equation}\label{eq:rmk1.9}
  \begin{aligned}
    \mathrm{Cap}_p(K,O) &= \left( \int_{R}^{R+D} (\omega_n e^{-nr})^{\frac{1}{1-p}} dt \right)^{1-p} = C(n,p) e^{-nR}(e^{\frac{n}{p-1}D}-1)^{1-p}. \\
    \mathrm{diam}(O) &\leq R+D+\pi e^{-(R+D)} \leq R + D + 1. \\
    |K| &= \int_0^R \omega_n f^n(r) dr \geq \int_1^R \omega_n f^n(r) dr = C(n) (e^{-n} - e^{-nR}).
  \end{aligned}
\end{equation}
  Hence
  \begin{align*}
    \mathcal{C}^{(M,g)}_{p,1}(K,O) &= \frac{\mathrm{Cap}_p(K,O)\,\mathrm{diam}(O)^p}{|K|} \\
    &\leq C(n,p) \frac{e^{-nR}\bigl(e^{\frac{nD}{p-1}}-1\bigr)^{1-p}(R+D+1)^p}{e^{-n}-e^{-nR}} \\
    &\to 0 \qquad \text{as } R \to \infty,
  \end{align*}
      and \eqref{eq1.6} follows, since the case $\mu<1$ is contained in the first part of the proof.
  \item  [(3)] Now we prove \eqref{eq:1.7}. First let $\mu=1$.
    Fix $D_0>0$. By Corollary 1.5 in \cite{jin2026principalpfrequencyestimatesnoncompact}, every domain $O$ with $D=\mathrm{diam}(O)\leq D_0$ satisfies
    \[
      \lambda_{1,p}(O) \geq \frac{C(p,n,\kappa,D_0)}{D^p}.
    \]
    From \eqref{eq:eigen-vs-cap}, we obtain
    \[
      \mathcal{C}^{(M,g)}_{p,1}(K,O) = \frac{\mathrm{Cap}_{p}(K,O) D^p}{|K|} \geq \lambda_{1,p}(O) D^p \geq C(p,n,\kappa,D_0) > 0.
    \]
    Let $\mu>1$. By Bishop--Gromov volume comparison, for $D\leq D_0$,
    \[
      |K|\leq |O|\leq V_{-\kappa}(D)\leq C(n,\kappa,D_0)D^{n+1},
    \]
    where $V_{-\kappa}(D)$ denotes the volume of a radius-$D$ ball in the simply connected space form of sectional curvature $-\kappa$. Therefore,
    \[
      \mathcal{C}_{p,\mu}(K,O)
      =\mathcal{C}_{p,1}(K,O)
      \left(\frac{D^{n+1}}{|K|}\right)^{\mu-1}
      \geq C(p,n,\mu,\kappa,D_0)>0.
    \]
    If $\mu<1$, rescale the example $(M_\varepsilon,g_\varepsilon)$ from part (1) by $\widetilde g_\varepsilon=\lambda^2g_\varepsilon$, where $\lambda=D_0/4$. Then $\operatorname{diam}_{\widetilde g_\varepsilon}(B_2)\leq D_0$, while the scale invariance of $\mathcal{C}_{p,\mu}$ preserves the limit $\mathcal{C}_{p,\mu}(B_1,B_2)\to0$.
  \item [(4)] Finally, we prove \eqref{eq:1.8}. By scaling, it is enough to consider $\kappa=1$. If $\mu=1$,  then Corollary 1.5 in \cite{jin2026principalpfrequencyestimatesnoncompact} and \eqref{eq:eigen-vs-cap} give
  \[
      \frac{\mathrm{Cap}_p(K,O)}{|K|} \geq \lambda_{1,p}(O) \geq \bar \lambda_{\mathrm{diam}(O),p,n+1} \geq C(n,p)e^{-n \mathrm{diam}(O)},
  \]
  which is
  \[
    \mathcal{D}^{(M,g)}_{p,1}(K,O) = \frac{\mathrm{Cap}_p(K,O) e^{n \mathrm{diam}(O)}}{|K|} \geq C(n,p) > 0.
  \]
  Now let $\mu>1$, then by the volume comparison,
  \[
    |K| \leq |B_{\mathbb{H}^{n+1}}(\mathrm{diam}(K))| \leq C(n) e^{n \mathrm{diam}(K)} \leq C(n) e^{n \mathrm{diam}(O)}.
  \]
  Hence
  \begin{align*}
    \mathcal{D}_{p,\mu}(K,O) &= \mathcal{D}_{p,1}(K,O) \cdot \frac{e^{n(\mu-1) \mathrm{diam}(O)}}{|K|^{\mu-1}} \\
    &\geq C(n,\mu)\mathcal{D}_{p,1}(K,O) \\
    &\geq C(n,p,\mu)>0.
  \end{align*}
  If $\mu<1$, let $(M_\epsilon,g_\epsilon)$, $K=B_1$, $O=B_2$ be the same as in the case $\mu<1$ in the proof of the first part, i.e. let $(M_\epsilon,g_\varepsilon)$ be the same as in \eqref{eq:warped} and $K,O$ be the geodesic balls centered at the origin with radii $1$ and $2$. We have
  \begin{align*}
    \mathcal{D}_{p,\mu}(B_1,B_2) &= \frac{\mathrm{Cap}_p(B_1,B_2) e^{n\mu \mathrm{diam}(B_2)}}{|B_1|^\mu} \\
    &\leq \frac{\omega_n \epsilon^n e^{4n \mu}}{C(n,\mu) \epsilon^{n\mu}} \\
    &= C(n,p,\mu,\omega_n) \epsilon^{n(1-\mu)} \to 0 \quad \text{as $\epsilon \to 0$}.
  \end{align*}
\end{itemize}

\begin{remark}
  Let us focus on the case $M=\R^n$ and $\mu=1$. It is known that
  \[
    \mathrm{Cap}_p(K,O) \geq \mathrm{Cap}_p(B_r,B_R),
  \]
  where $B_r$ and $B_R$ are concentric balls with the same volume as $K$ and $O$ respectively, and their radii are $r$ and $R$ respectively (see \cite{maz2013sobolev}). Denote the area of the sphere with radius $t$ by $S_t$. Together with the classical isodiametric inequality $|O| \leq |B_1| (\mathrm{diam}(O)/2)^n$, we have
  \begin{align*}
    \mathcal{C}^{(\R^n,g_{\R^n})}_{p,1}(K,O) &\geq \frac{\mathrm{Cap}_p(B_r,B_R) 2^p R^p}{|B_1| r^n} \\
    &= \left(\int_r^R S_t^{\frac{1}{1-p}} dt\right)^{1-p} \frac{2^p R^p}{|B_1| r^n} \\
    &= \left(\int_r^R t^{\frac{n-1}{1-p}} dt\right)^{1-p} \frac{n2^p R^p}{r^n} \\
    &= \left\{\begin{array}{ll}
      n2^p\left(\frac{p-1}{n-p}\right)^{1-p} \left( (R/r)^{\frac{p}{1-p}}-(R/r)^{\frac{n}{1-p}} \right)^{1-p}, & \quad p < n \\
      n2^p (\ln (R/r))^{1-p} (R/r)^p, & \quad p=n, \\
      n2^p\left(\frac{p-1}{p-n}\right)^{1-p} \left( (R/r)^{\frac{n}{1-p}} - (R/r)^{\frac{p}{1-p}} \right)^{1-p}, & \quad p > n
    \end{array}\right. \\
    &\geq \left\{\begin{array}{ll}
      n 2^p (p-1)^{1-p} \left(\frac{p^p}{n^n}\right)^{\frac{1-p}{n-p}}, & \quad p \neq n \\
      p 2^p \left(\frac{p-1}{p}\right)^{1-p} e^{p-1}, & \quad p=n. \\
    \end{array}\right.
  \end{align*}
  Equality holds if $K$ and $O$ are concentric balls of radii $r$ and $R$, respectively, and
  \[
    R/r = \left\{\begin{array}{ll}
      (p/n)^{\frac{1-p}{n-p}}, & \quad p \neq n \\
      e^{\frac{p-1}{p}}, & \quad p=n.
    \end{array}\right.
  \]
  It is straightforward to check that the right-hand side of the above lower bound is continuous with respect to $p$.
\end{remark}

\begin{proof}[Proof of Remark \ref{rmk1.7}]

  Let $\epsilon > 0$ be sufficiently small and let $\eta_\epsilon(t)$ be a non-increasing $C^\infty$-function on $[0,+\infty)$, such that $\eta_\epsilon(t) = 1$, $\eta_\epsilon|_{[\delta,+\infty)} \equiv 0$ for some $\delta = \delta(\epsilon) = \mathcal{O}(\epsilon)$ as $\epsilon \to 0$, and $\int_0^\infty \eta_\epsilon(t) dt = \epsilon$. Then the function
  \begin{equation}\label{eq1.7}
    j_\epsilon(t) = \int_0^t \eta_\epsilon(s) ds
  \end{equation}
  is smooth and non-decreasing and satisfies $j_\epsilon(0)=0$, $j_\epsilon'(0)=1$, $j_\epsilon|_{[\delta,+\infty)} \equiv \epsilon$. Consider the warped product metric
  \begin{equation}\label{eq:warped}
    M_\epsilon = (\R^{n+1}, g_\epsilon = dt^2 + j_\epsilon^2 g_{\mathbb{S}^n}).
  \end{equation}
  It satisfies $\mathrm{Ric} \geq 0$ (see \cite{jin2026estimatedirichleteigenvalueplaplacian}).

  If $b > 1-p$. Consider the geodesic balls $B = B(o,2\delta)$, $B' = B(o,2 \delta + \delta^{1/2})$. By similar calculations as above, we have
  \begin{align*}
    \mathcal{C}^{(M_\epsilon,g_\epsilon)}_{p,a,b,c,d}(B,B') &= \frac{\mathrm{Cap}_p(B,B')}{|B|^a|B'|^b|\partial B|^c |\partial B'|^d} \\
    &\leq \frac{\left(\int_{2 \delta}^{2 \delta + \delta^{1/2}} (\epsilon^n \omega_n)^{\frac{1}{1-p}} dt\right)^{1-p}}{\left( \int_\delta^{2 \delta} \epsilon^n \omega_n dt \right)^a \left( \int_\delta^{2 \delta + \delta^{1/2}} \epsilon^n \omega_n dt \right)^b (\epsilon^n \omega_n)^{c+d}} \\
    &\leq C \frac{\delta^{\frac{1-p}{2}}\epsilon^n}{\delta^{a+\frac{b}{2}} \epsilon^{n(a+b+c+d)}} \\
    &\leq C \epsilon^{(b-1+p)/2}.
  \end{align*}
  The right-hand side tends to 0 as $\epsilon \to 0$.

  Now assume $a+b \neq 1-p$. Consider the manifold $(\mathbb{S}^1 \times \R^n, \epsilon^2 g_{\mathbb{S}^1} \times g_{\R^n})$ and $K = \mathbb{S}^1 \times B_r$, $O = \mathbb{S}^1 \times B_R$, where $0 < r < R$. Then
  \[
    \mathrm{Cap}_p(K,O) = \left(\int_r^R |\mathbb{S}^1 \times \partial B_t|^{\frac{1}{1-p}} dt \right)^{1-p} = C(n,p,r,R)\epsilon,
  \]
  and
  \[
    |K||O||\partial K||\partial O| = C(n,r,R,a,b,c,d)\epsilon^{a+b+c+d}.
  \]
  Hence we obtain
  \begin{align*}
    \mathcal{C}^{(M_\epsilon,g_\epsilon)}_{p,a,b,c,d}(K,O) &= \frac{C(n,p,r,R)\epsilon}{C(n,r,R,a,b,c,d) \epsilon^{a+b+c+d}} \\
    &= C(n,p,r,R,a,b,c,d) \epsilon^{1-(a+b+c+d)} \\
    &= C(n,p,r,R,a,b,c,d) \epsilon^{\frac{p-1+a+b}{n}}.
  \end{align*}
  If $a+b > 1-p$, then the right-hand side tends to $0$ as $\epsilon \to 0$; and if $a+b < 1-p$, then the right-hand side also tends to $0$ as $\epsilon \to \infty$.
\end{proof}


\noindent{Xiaoshang Jin}\\
  School of mathematics and statistics, Huazhong University of science and technology, 430074, Wuhan, P.R. China
 \\Email address: jinxs@hust.edu.cn
 \\~\\
 \noindent{Zhiwei L\"u}\\
  School of mathematics and statistics, Huazhong University of science and technology, Wuhan, P. R. China. 430074
 \\Email address: m202470005@hust.edu.cn
 \\~\\
\noindent{Jiabin Yin}\\
 School of Mathematics and Statistics, Xinyang Normal University, 464000, Xinyang, P.R. China
	\\Email address: jiabinyin@126.com
\end{document}